\providecommand{\norm}[1]{\left\lVert#1 \right\rVert}
\newtheorem{thr}{Theorem}
\newtheorem{lem}{Lemma}
\newenvironment{rem}
{\begin{trivlist}\item[\hskip%
\labelsep{{\it \noindent Remark}}]}{\hfill
\end{trivlist}}
\newenvironment{proof}
{\begin{trivlist}\item[\hskip%
\labelsep{\it \noindent Proof.}]}{\hfill $\square$ \rm
\end{trivlist}}
\newenvironment{acknowledgement}
{\begin{trivlist}\item[\hskip%
\labelsep{\bf \noindent Acknowledgements.}]}{\hfill
\end{trivlist}}
\numberwithin{equation}{section}
\begin{document}
\begin{center}
{\huge {\bf Spectral properties for a type of} \\ {\bf heptadiagonal symmetric matrices}} \\
\vspace{1.0cm}
{\Large João Lita da Silva\footnote{\textit{E-mail address:} \texttt{jfls@fct.unl.pt}}} \\
\vspace{0.1cm}
\textit{Department of Mathematics and GeoBioTec \\ Faculty of Sciences and Technology \\
NOVA University of Lisbon \\ Quinta da Torre, 2829-516 Caparica,
Portugal}
\end{center}

\bigskip

\bigskip

\bigskip

\begin{abstract}
In this paper we express the eigenvalues of a sort of real heptadiagonal symmetric matrices as the zeros of explicit rational functions establishing upper and lower bounds for each of them. From these prescribed eigenvalues we compute also eigenvectors for these type of matrices. A formula not depending on any unknown parameter for the determinant and the inverse of these heptadiagonal matrices is still provided.
\end{abstract}

\bigskip

{\small{\textit{Key words:} Heptadiagonal matrix, eigenvalue, eigenvector, determinant, inverse matrix}}

\bigskip

{\small{\textbf{2010 Mathematics Subject Classification:}
15A18, 15A15, 15A09.}}

\bigskip

\section{Introduction}\label{sec:1}

\indent

The main goal of this paper is to express the eigenvalues of the following $n \times n$ real heptadiagonal matrix
\begin{equation}\label{eq:1.1}
\mathbf{H}_{n} = \left[
\begin{array}{ccccccccccc}
  \xi & \eta & c & d & 0 & \ldots & \ldots & \ldots & \ldots & \ldots & 0 \\
  \eta & a & b & c & d &  \ddots & & & & & \vdots \\
  c & b & a & b & c & \ddots & \ddots &  & &  & \vdots \\
  d & c & b & a & b & \ddots & \ddots & \ddots & & & \vdots \\
  0 & d & c & b & a & \ddots & \ddots & \ddots & \ddots &  & \vdots \\
  \vdots & \ddots & \ddots & \ddots & \ddots & \ddots & \ddots & \ddots & \ddots & \ddots & \vdots \\
  \vdots & & \ddots & \ddots & \ddots & \ddots & a & b & c & d & 0 \\
  \vdots &  & & \ddots & \ddots & \ddots & b & a & b & c & d \\
  \vdots &  & & & \ddots & \ddots & c & b & a & b & c \\
  \vdots &  &  & & & \ddots & d & c & b & a & \eta \\
  0 & \ldots & \ldots & \ldots & \ldots & \ldots & 0 & d & c & \eta & \xi
\end{array}
\right]
\end{equation}
as the zeros of explicit rational functions giving also upper and lower bounds non-depending of any unknown parameter to each of them. Further, we shall compute eigenvectors for these sort of matrices at the expense of the prescribed eigenvalues. The matrices of the form \eqref{eq:1.1} fall into a general class of matrices called band matrices (see \cite{Pissanetsky84}, page $13$) which are widely used in several areas of science and engineering such as numerical solution of ordinary and partial differential equations (ODE and PDE), interpolation problems, boundary value problems among others (see, for instance, \cite{Asplund59}, \cite{Demko77}, \cite{Fischer69}, \cite{Haley80}, \cite{Keeping70}, \cite{Usmani76}).

To accomplish our purpose and in a first stage, we shall exploit the so-called \emph{modification technique} founded by Fasino in \cite{Fasino88} for matrices of the type \eqref{eq:1.1} in order to decompose them into an orthogonal block diagonalization and, at a second stage, use results concerning to a secular equation of diagonal matrices perturbed by the addition of rank-one matrices developed by Anderson in the nineties (see \cite{Anderson96}). Our decomposition will also lead us to explicit formulae for the determinant and the inverse of complex heptadiagonal matrices \eqref{eq:1.1} assuming, of course, its nonsingularity.

\bigskip

\section{Auxiliary tools}\label{sec:2}

\indent

Consider the class of matrices defined by
\begin{equation*}
\EuScript{A}_{n} := \left\{\mathbf{A}_{n} \in \EuScript{M}_{n}(\mathbb{C}): \left[\mathbf{A}_{n} \right]_{k-1,\ell} + \left[\mathbf{A}_{n} \right]_{k+1,\ell} = \left[\mathbf{A}_{n} \right]_{k,\ell-1} + \left[\mathbf{A}_{n} \right]_{k,\ell+1}, \, k,\ell = 1,\ldots,n \right\}
\end{equation*}
where it is assumed $\left[\mathbf{A}_{n} \right]_{n+1,\ell} = \left[\mathbf{A}_{n} \right]_{k,n+1} = \left[\mathbf{A}_{n} \right]_{0,\ell} = \left[\mathbf{A}_{n} \right]_{k,0} = 0$ for all $k,\ell$.
We begin by gather two results announced in \cite{Bini83}, presenting them for complex matrices. The proofs do not suffer any changes from the original ones and so we omit the details.


\begin{lem}\label{lem1}
\noindent \newline \textnormal{(a)} The class $\EuScript{A}_{n}$ is the algebra generated over $\mathbb{C}$ by the $n \times n$ matrix
\begin{equation}\label{eq:2.0}
\boldsymbol{\Omega}_{n} = \left[
\begin{array}{ccccccc}
  0 & 1 & 0 & \ldots & \ldots & \ldots & 0 \\
  1 & 0 & 1 & \ddots & & & \vdots \\
  0 & 1 & 0 & \ddots & \ddots & & \vdots \\
  \vdots & \ddots & \ddots & \ddots & \ddots & \ddots & \vdots \\
  \vdots & & \ddots & \ddots & 0 & 1 & 0 \\
  \vdots & & & \ddots & 1 & 0 & 1 \\
  0 & \ldots & \ldots & \ldots & 0 & 1 & 0
\end{array}
\right].
\end{equation}

\medskip

\noindent \textnormal{(b)} If $\mathbf{A}_{n} \in \EuScript{A}_{n}$ and $\mathbf{a}^{\top}$ is its first row then
\begin{equation*}
\mathbf{A}_{n} = \sum_{k=0}^{n-1} \omega_{k+1} \boldsymbol{\Omega}_{n}^{k}
\end{equation*}
where $\boldsymbol{\Omega}_{n}$ is the $n \times n$ matrix \eqref{eq:2.0} and $\boldsymbol{\omega}^{\top} = (\omega_{1},\omega_{2},\ldots,\omega_{n})$ is the solution of the upper triangular system $\mathbf{U}_{n} \boldsymbol{\omega} = \mathbf{a}$, with $\left[\mathbf{U}_{n} \right]_{0,0} := 1$, $\left[\mathbf{U}_{n} \right]_{0,\ell} := 0$ for all $1 \leqslant \ell \leqslant n$,
\begin{equation*}
\left[\mathbf{U}_{n} \right]_{k,\ell} := \left\{
\begin{array}{l}
  \left[\mathbf{U}_{n} \right]_{k-1,\ell-1} + \left[\mathbf{U}_{n} \right]_{k+1,\ell-1}, \quad 1 \leqslant k \leqslant \ell \leqslant n \\
  0, \quad \textit{otherwise}
\end{array}
\right..
\end{equation*}
\end{lem}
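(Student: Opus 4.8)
The plan is to recast membership in $\EuScript{A}_{n}$ as a commutation relation with $\boldsymbol{\Omega}_{n}$ and then to invoke the structure of the commutant of a nonderogatory matrix. With the index conventions built into the definition of $\EuScript{A}_{n}$, a direct computation gives $[\boldsymbol{\Omega}_{n}\mathbf{A}_{n}]_{k,\ell}=[\mathbf{A}_{n}]_{k-1,\ell}+[\mathbf{A}_{n}]_{k+1,\ell}$ and $[\mathbf{A}_{n}\boldsymbol{\Omega}_{n}]_{k,\ell}=[\mathbf{A}_{n}]_{k,\ell-1}+[\mathbf{A}_{n}]_{k,\ell+1}$, simply because the only nonzero entries of $\boldsymbol{\Omega}_{n}$ sit on the first sub- and super-diagonals. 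Hence the $n^{2}$ defining equations of $\EuScript{A}_{n}$ amount to the single matrix identity $\boldsymbol{\Omega}_{n}\mathbf{A}_{n}=\mathbf{A}_{n}\boldsymbol{\Omega}_{n}$, i.e.\ $\EuScript{A}_{n}$ is the commutant of $\boldsymbol{\Omega}_{n}$ inside $\EuScript{M}_{n}(\mathbb{C})$. This commutant is a subalgebra containing $\mathbf{I}_{n}$ and $\boldsymbol{\Omega}_{n}$, so it contains the polynomial algebra $\mathbb{C}[\boldsymbol{\Omega}_{n}]$, which by Cayley--Hamilton is spanned by $\mathbf{I}_{n},\boldsymbol{\Omega}_{n},\ldots,\boldsymbol{\Omega}_{n}^{\,n-1}$.

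For the opposite inclusion I would use that $\boldsymbol{\Omega}_{n}$ is nonderogatory. Indeed, $\boldsymbol{\Omega}_{n}$ is tridiagonal with all sub- and super-diagonal entries equal to $1$, so for any eigenvalue $\lambda$ the first $n-1$ rows of $(\boldsymbol{\Omega}_{n}-\lambda\mathbf{I}_{n})x=0$ determine the components $x_{2},x_{3},\ldots,x_{n}$ successively from $x_{1}$; thus every eigenspace is one-dimensional, the minimal polynomial of $\boldsymbol{\Omega}_{n}$ has degree $n$ (equivalently, $\boldsymbol{\Omega}_{n}$ has the $n$ distinct eigenvalues $2\cos\frac{j\pi}{n+1}$, $j=1,\ldots,n$), and the commutant of $\boldsymbol{\Omega}_{n}$ therefore coincides with $\mathbb{C}[\boldsymbol{\Omega}_{n}]$. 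Combined with the previous paragraph this gives $\EuScript{A}_{n}=\mathbb{C}[\boldsymbol{\Omega}_{n}]$, the algebra generated over $\mathbb{C}$ by $\boldsymbol{\Omega}_{n}$, which is exactly assertion (a).

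To obtain (b) I would render the identification of (a) explicit through first rows. By (a) there are unique scalars $\omega_{1},\ldots,\omega_{n}$ with $\mathbf{A}_{n}=\sum_{k=0}^{n-1}\omega_{k+1}\boldsymbol{\Omega}_{n}^{\,k}$; denoting by $\mathbf{r}_{k}^{\top}$ the first row of $\boldsymbol{\Omega}_{n}^{\,k}$, reading off first rows yields $\mathbf{a}^{\top}=\sum_{k=0}^{n-1}\omega_{k+1}\mathbf{r}_{k}^{\top}$. From $\boldsymbol{\Omega}_{n}^{\,k+1}=\boldsymbol{\Omega}_{n}^{\,k}\boldsymbol{\Omega}_{n}$ one gets the recurrence $(\mathbf{r}_{k+1})_{\ell}=(\mathbf{r}_{k})_{\ell-1}+(\mathbf{r}_{k})_{\ell+1}$, with $(\mathbf{r}_{k})_{0}=(\mathbf{r}_{k})_{n+1}=0$ and $\mathbf{r}_{0}=\mathbf{e}_{1}$. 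Now collect these rows as the columns of an $n\times n$ matrix by putting $[\mathbf{U}_{n}]_{\ell,j}:=(\mathbf{r}_{j-1})_{\ell}$: then the relation $\mathbf{a}^{\top}=\sum_{k}\omega_{k+1}\mathbf{r}_{k}^{\top}$ is precisely the system $\mathbf{U}_{n}\boldsymbol{\omega}=\mathbf{a}$, while the recurrence for $\mathbf{r}_{k}$ translates term by term into $[\mathbf{U}_{n}]_{k,\ell}=[\mathbf{U}_{n}]_{k-1,\ell-1}+[\mathbf{U}_{n}]_{k+1,\ell-1}$, the initial column $\mathbf{r}_{0}=\mathbf{e}_{1}$ being recorded by $[\mathbf{U}_{n}]_{0,0}=1$, $[\mathbf{U}_{n}]_{0,\ell}=0$ and the ``otherwise $=0$'' clause. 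Since an easy induction shows $(\mathbf{r}_{j-1})_{j}=1$ and $(\mathbf{r}_{j-1})_{\ell}=0$ for $\ell>j$, the matrix $\mathbf{U}_{n}$ is unit upper triangular, hence invertible, so the system has a unique solution --- in agreement with the uniqueness of $\boldsymbol{\omega}$ in (a).

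The hard part is not conceptual but a matter of keeping the edge cases straight, namely checking that the two boundaries of $\boldsymbol{\Omega}_{n}$ are reproduced exactly by the conventions on $\mathbf{U}_{n}$. On the right, the recurrence for $[\mathbf{U}_{n}]_{k,\ell}$ (valid for $1\leqslant k\leqslant\ell\leqslant n$) can invoke the symbol $[\mathbf{U}_{n}]_{n+1,\ell-1}$ only when $k=\ell=n$, and then $n+1>\ell-1$, so that symbol legitimately lies in the vanishing region, matching $(\mathbf{r}_{k})_{n+1}=0$; on the left, the relation $(\mathbf{r}_{k})_{0}=0$ and the initialization $\mathbf{r}_{0}=\mathbf{e}_{1}$ are precisely what the auxiliary values $[\mathbf{U}_{n}]_{0,0}=1$ and $[\mathbf{U}_{n}]_{0,\ell}=0$ ($1\leqslant\ell\leqslant n$) encode. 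Once these boundary bookkeeping points are settled, everything else is routine.
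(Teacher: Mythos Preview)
Your argument is correct. The paper does not actually give its own proof of this lemma: it simply records the result from Bini and Capovani \cite{Bini83} and states that ``the proofs do not suffer any changes from the original ones and so we omit the details.'' Your approach --- recasting the defining relations of $\EuScript{A}_{n}$ as the commutation identity $\boldsymbol{\Omega}_{n}\mathbf{A}_{n}=\mathbf{A}_{n}\boldsymbol{\Omega}_{n}$, identifying the commutant of the nonderogatory matrix $\boldsymbol{\Omega}_{n}$ with $\mathbb{C}[\boldsymbol{\Omega}_{n}]$, and then reading off first rows to recover the explicit triangular system for $\boldsymbol{\omega}$ --- is exactly the line taken in the cited source, so there is nothing to contrast. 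One very minor wording point: the auxiliary value $[\mathbf{U}_{n}]_{0,0}=1$ does not literally correspond to an entry of any $\mathbf{r}_{k}$ under your identification $[\mathbf{U}_{n}]_{\ell,j}=(\mathbf{r}_{j-1})_{\ell}$; rather, it is a device that forces $[\mathbf{U}_{n}]_{1,1}=1$ through the recurrence, thereby encoding the initial condition $\mathbf{r}_{0}=\mathbf{e}_{1}$. Your boundary bookkeeping already makes this work correctly, but you may wish to phrase that sentence a bit more carefully.
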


Throughout, $n$ will denote an integer greater or equal to $5$ and $\mathbf{S}_{n}$ will be the $n \times n$ symmetric, involutory and orthogonal matrix defined by
\begin{equation}\label{eq:2.1}
\left[\mathbf{S}_{n} \right]_{k,\ell} := \sqrt{\frac{2}{n+1}} \sin \left(\frac{k \ell \pi}{n+1} \right).
\end{equation}
Our second auxiliary result provide us an orthogonal diagonalization for the following $n \times n$ complex heptadiagonal symmetric matrix
\begin{equation}\label{eq:2.2}
\mathbf{\widehat{H}}_{n} = \left[
\begin{array}{ccccccccccc}
  a-c & b-d & c & d & 0 & \ldots & \ldots & \ldots & \ldots & \ldots & 0 \\
  b-d & a & b & c & d &  \ddots & & & & & \vdots \\
  c & b & a & b & c & \ddots & \ddots &  & &  & \vdots \\
  d & c & b & a & b & \ddots & \ddots & \ddots & & & \vdots \\
  0 & d & c & b & a & \ddots & \ddots & \ddots & \ddots &  & \vdots \\
  \vdots & \ddots & \ddots & \ddots & \ddots & \ddots & \ddots & \ddots & \ddots & \ddots & \vdots \\
  \vdots & & \ddots & \ddots & \ddots & \ddots & a & b & c & d & 0 \\
  \vdots &  & & \ddots & \ddots & \ddots & b & a & b & c & d \\
  \vdots &  & & & \ddots & \ddots & c & b & a & b & c \\
  \vdots &  &  & & & \ddots & d & c & b & a & b-d \\
  0 & \ldots & \ldots & \ldots & \ldots & \ldots & 0 & d & c & b-d & a-c
\end{array}
\right].
\end{equation}


\begin{lem}\label{lem2}
Let $n \geqslant 5$ be an integer, $a,b,c,d \in \mathbb{C}$ and
\begin{equation}\label{eq:2.3}
\lambda_{k} = a + 2b \cos \left(\tfrac{k \pi}{n+1} \right) + 2c \cos \left(\tfrac{2 k \pi}{n+1} \right) + 2d \cos \left(\tfrac{3 k \pi}{n+1} \right), \quad k = 1,\ldots,n.
\end{equation}
If $\mathbf{\widehat{H}}_{n}$ is the $n \times n$ matrix \eqref{eq:2.2} then
\begin{equation*}
\mathbf{\widehat{H}}_{n} = \mathbf{S}_{n} \boldsymbol{\Lambda}_{n} \mathbf{S}_{n}
\end{equation*}
where $\boldsymbol{\Lambda}_{n} = \mathrm{diag} \left(\lambda_{1},\ldots, \lambda_{n} \right)$ and $\mathbf{S}_{n}$ is the matrix \eqref{eq:2.1}.
\end{lem}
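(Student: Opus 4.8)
The plan is to recognize $\mathbf{\widehat{H}}_{n}$ as a cubic polynomial in the generator $\boldsymbol{\Omega}_{n}$ of Lemma~\ref{lem1} and then diagonalize that generator with $\mathbf{S}_{n}$. Concretely, I would show that
\[
\mathbf{\widehat{H}}_{n} = (a-2c)\,\mathbf{I}_{n} + (b-3d)\,\boldsymbol{\Omega}_{n} + c\,\boldsymbol{\Omega}_{n}^{2} + d\,\boldsymbol{\Omega}_{n}^{3} .
\]
To do so I would first record the band structure of the powers of $\boldsymbol{\Omega}_{n}$ (the adjacency matrix of the path on $n$ vertices): counting the walks of length $2$ and $3$ gives that $\boldsymbol{\Omega}_{n}^{2}$ is pentadiagonal with main diagonal $(1,2,\ldots,2,1)$, vanishing first superdiagonal, and second superdiagonal identically $1$, whereas $\boldsymbol{\Omega}_{n}^{3}$ has vanishing main diagonal, first superdiagonal $(2,3,\ldots,3,2)$, vanishing second superdiagonal, and third superdiagonal identically $1$ — the ``defects'' $1$ and $2$ occurring only at the two ends. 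Substituting these into the right-hand side and comparing with \eqref{eq:2.2} entry by entry, the interior band values agree because $(a-2c)+2c=a$ and $(b-3d)+3d=b$ (the second and third superdiagonals being $c$ and $d$ throughout), while the two exceptional corner pairs agree because $(a-2c)+c=a-c$ and $(b-3d)+2d=b-d$. This corner bookkeeping is the one step requiring a little care, and it is precisely where the special entries $a-c$ and $b-d$ of $\mathbf{\widehat{H}}_{n}$ are consumed. (Equivalently: one may verify the defining relations of $\EuScript{A}_{n}$ for $\mathbf{\widehat{H}}_{n}$ — vacuous in the interior, a short check for indices in $\{1,2,n-1,n\}$ — and then read off the same coefficients from Lemma~\ref{lem1}(b), since the first row of $\mathbf{\widehat{H}}_{n}$ is $(a-c,b-d,c,d,0,\ldots,0)$ and $\mathbf{U}_{n}$ is upper triangular with unit diagonal.)

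Next I would use the classical eigendecomposition of $\boldsymbol{\Omega}_{n}$: its eigenvectors are the sine vectors $\big(\sin\tfrac{k\ell\pi}{n+1}\big)_{\ell=1}^{n}$ with eigenvalues $2\cos\tfrac{k\pi}{n+1}$, which after normalization is exactly the identity $\boldsymbol{\Omega}_{n} = \mathbf{S}_{n}\mathbf{D}_{n}\mathbf{S}_{n}$ with $\mathbf{D}_{n} := \mathrm{diag}\big(2\cos\tfrac{\pi}{n+1},\ldots,2\cos\tfrac{n\pi}{n+1}\big)$ and $\mathbf{S}_{n}$ the symmetric involution \eqref{eq:2.1}. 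Because $\mathbf{S}_{n}^{2} = \mathbf{I}_{n}$ we get $\boldsymbol{\Omega}_{n}^{j} = \mathbf{S}_{n}\mathbf{D}_{n}^{j}\mathbf{S}_{n}$ for every integer $j \geq 0$, hence
\[
\mathbf{\widehat{H}}_{n} = \mathbf{S}_{n}\big[(a-2c)\mathbf{I}_{n} + (b-3d)\mathbf{D}_{n} + c\,\mathbf{D}_{n}^{2} + d\,\mathbf{D}_{n}^{3}\big]\mathbf{S}_{n} ,
\]
and the bracketed diagonal matrix has $k$-th diagonal entry $(a-2c) + 2(b-3d)\cos\theta_{k} + 4c\cos^{2}\theta_{k} + 8d\cos^{3}\theta_{k}$, where $\theta_{k} := \tfrac{k\pi}{n+1}$.

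Finally, the elementary identities $4\cos^{2}\theta = 2 + 2\cos 2\theta$ and $8\cos^{3}\theta = 6\cos\theta + 2\cos 3\theta$ turn this $k$-th entry into $a + 2b\cos\theta_{k} + 2c\cos 2\theta_{k} + 2d\cos 3\theta_{k} = \lambda_{k}$, so the bracketed matrix is $\boldsymbol{\Lambda}_{n}$ and $\mathbf{\widehat{H}}_{n} = \mathbf{S}_{n}\boldsymbol{\Lambda}_{n}\mathbf{S}_{n}$, as asserted. The only genuinely delicate point in the whole argument is the corner bookkeeping of the first paragraph; the band structure of the powers of $\boldsymbol{\Omega}_{n}$, its sine-vector diagonalization, and the closing trigonometric reduction are all routine.
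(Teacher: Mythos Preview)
Your proof is correct and follows essentially the same route as the paper: express $\mathbf{\widehat{H}}_{n}$ as the cubic $(a-2c)\mathbf{I}_{n} + (b-3d)\boldsymbol{\Omega}_{n} + c\,\boldsymbol{\Omega}_{n}^{2} + d\,\boldsymbol{\Omega}_{n}^{3}$ via Lemma~\ref{lem1}, then diagonalize $\boldsymbol{\Omega}_{n}$ by $\mathbf{S}_{n}$ and reduce with the multiple-angle identities. The only difference is presentational --- you spell out the corner bookkeeping for $\boldsymbol{\Omega}_{n}^{2}$ and $\boldsymbol{\Omega}_{n}^{3}$ explicitly, whereas the paper simply invokes $\mathbf{\widehat{H}}_{n} \in \EuScript{A}_{n}$ and reads off the coefficients from the first row.
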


\begin{proof}
Suppose an integer $n \geqslant 5$ and $a,b,c,d \in \mathbb{C}$. Since $\mathbf{\widehat{H}}_{n} \in \EuScript{A}_{n}$ and its first row is $\mathbf{a}^{\top} = (a - c, b - d,c,d,0,\ldots,0)$ we have, from Lemma~\ref{lem1},
\begin{equation*}
\mathbf{\widehat{H}}_{n} = (a - 2c) \mathbf{I}_{n} + (b - 3d) \boldsymbol{\Omega}_{n} + c \, \boldsymbol{\Omega}_{n}^{2} + d \, \boldsymbol{\Omega}_{n}^{3}.
\end{equation*}
Using the spectral decomposition
\begin{equation*}
\boldsymbol{\Omega}_{n} = \sum_{\ell=1}^{n} 2 \cos \left(\frac{\ell \pi}{n + 1} \right) \mathbf{s}_{\ell} \, \mathbf{s}_{\ell}^{\top},
\end{equation*}
where
\begin{equation*}
\mathbf{s}_{\ell} = \left[
\begin{array}{c}
  \sqrt{\frac{2}{n+1}} \sin \left(\frac{\ell \pi}{n + 1} \right) \\
  \sqrt{\frac{2}{n+1}} \sin \left(\frac{2\ell \pi}{n + 1} \right) \\
  \vdots \\
  \sqrt{\frac{2}{n+1}} \sin \left(\frac{n\ell \pi}{n + 1} \right)
\end{array}
\right]
\end{equation*}
(i.e. the $\ell$th column of $\mathbf{S}_{n}$), it follows
\begin{align*}
\mathbf{\widehat{H}}_{n} &= \sum_{\ell=1}^{n} \left[(a - 2c) + 2 (b - 3d) \cos \left(\frac{\ell \pi}{n + 1} \right) + 4c \cos^{2} \left(\frac{\ell \pi}{n + 1} \right) + 8d \cos^{3} \left(\frac{\ell \pi}{n + 1} \right) \right] \mathbf{s}_{\ell} \, \mathbf{s}_{\ell}^{\top} \\
&= \sum_{\ell=1}^{n} \lambda_{k} \mathbf{s}_{\ell} \, \mathbf{s}_{\ell}^{\top} \\
&= \mathbf{S}_{n} \boldsymbol{\Lambda}_{n} \mathbf{S}_{n}
\end{align*}
where $\boldsymbol{\Lambda}_{n} = \mathrm{diag} \left(\lambda_{1},\ldots, \lambda_{n} \right)$ and $\mathbf{S}_{n}$ is the matrix \eqref{eq:2.1}. The proof is completed.
\end{proof}

The following statement is an orthogonal block diagonalization for matrices $\mathbf{H}_{n}$ of the form \eqref{eq:1.1} extending Proposition 3.1 in \cite{Bini83} which is valid only for heptadiagonal symmetric Toeplitz matrices.


\begin{lem}\label{lem3}
Let $n \geqslant 5$ be an integer, $a,b,c,d, \xi, \eta \in \mathbb{C}$, $\lambda_{k}$, $k = 1,\ldots,n$ be given by \eqref{eq:2.3} and $\mathbf{H}_{n}$ the $n \times n$ matrix \eqref{eq:1.1}.

\medskip

\noindent \textnormal{(a)} If $n$ is even,
\begin{subequations}
\begin{equation}\label{eq:2.4a}
\mathbf{x} = \left[
\begin{array}{c}
  \frac{2}{\sqrt{n + 1}} \sin\left(\frac{\pi}{n+1} \right) \\[5pt]
  \frac{2}{\sqrt{n + 1}} \sin\left(\frac{3\pi}{n+1} \right) \\[5pt]
  \vdots \\[5pt]
  \frac{2}{\sqrt{n + 1}} \sin\left[\frac{(n - 1)\pi}{n+1} \right]
\end{array}
\right], \quad
\mathbf{y} = \left[
\begin{array}{c}
  \frac{2}{\sqrt{n + 1}} \sin\left(\frac{2\pi}{n+1} \right) \\[5pt]
  \frac{2}{\sqrt{n + 1}} \sin\left(\frac{6\pi}{n+1} \right) \\[5pt]
  \vdots \\[5pt]
  \frac{2}{\sqrt{n + 1}} \sin\left[\frac{(2n - 2)\pi}{n+1} \right]
\end{array}
\right]
\end{equation}
and
\begin{equation}\label{eq:2.4b}
\mathbf{v} = \left[
\begin{array}{c}
  \frac{2}{\sqrt{n + 1}} \sin\left(\frac{2\pi}{n+1} \right) \\[5pt]
  \frac{2}{\sqrt{n + 1}} \sin\left(\frac{4\pi}{n+1} \right) \\[5pt]
  \vdots \\[5pt]
  \frac{2}{\sqrt{n + 1}} \sin\left(\frac{n\pi}{n+1} \right)
\end{array}
\right], \quad
\mathbf{w} = \left[
\begin{array}{c}
  \frac{2}{\sqrt{n + 1}} \sin\left(\frac{4\pi}{n+1} \right) \\[5pt]
  \frac{2}{\sqrt{n + 1}} \sin\left(\frac{8\pi}{n+1} \right) \\[5pt]
  \vdots \\[5pt]
  \frac{2}{\sqrt{n + 1}} \sin\left(\frac{2n\pi}{n+1} \right)
\end{array}
\right]
\end{equation}
then
\begin{equation*}
\mathbf{H}_{n} = \mathbf{S}_{n} \mathbf{P}_{n} \left[
\begin{array}{cc}
\boldsymbol{\Phi}_{\frac{n}{2}}  & \mathbf{O} \\
\mathbf{O} & \boldsymbol{\Psi}_{\frac{n}{2}}
\end{array}
\right] \mathbf{P}_{n}^{\top} \mathbf{S}_{n}
\end{equation*}
where $\mathbf{S}_{n}$ is the $n \times n$ matrix \eqref{eq:2.1}, $\mathbf{P}_{n}$ is the $n \times n$ permutation matrix defined by
\begin{equation}\label{eq:2.4c}
\left[\mathbf{P}_{n} \right]_{k,\ell} = \left\{
\begin{array}{l}
1 \; \; \textit{if $k = 2\ell - 1$ or $k = 2\ell - n$} \\[5pt]
0 \; \; \textit{otherwise}
\end{array}
\right.
\end{equation}
and
\begin{gather}
\boldsymbol{\Phi}_{\frac{n}{2}} = \mathrm{diag} \left(\lambda_{1},\lambda_{3},\ldots,\lambda_{n-1} \right) + (c + \xi - a) \mathbf{x} \mathbf{x}^{\top} + (d + \eta - b) \mathbf{x} \mathbf{y}^{\top} + (d + \eta - b) \mathbf{y} \mathbf{x}^{\top} \label{eq:2.4d} \\
\boldsymbol{\Psi}_{\frac{n}{2}} = \mathrm{diag} \left(\lambda_{2},\lambda_{4},\ldots,\lambda_{n} \right) + (c + \xi - a) \mathbf{v} \mathbf{v}^{\top} + (d + \eta - b) \mathbf{v} \mathbf{w}^{\top} + (d + \eta - b) \mathbf{w} \mathbf{v}^{\top}. \label{eq:2.4e}
\end{gather}
\end{subequations}

\medskip

\noindent \textnormal{(b)} If $n$ is odd,
\begin{subequations}
\begin{equation}\label{eq:2.5a}
\mathbf{x} = \left[
\begin{array}{c}
  \frac{2}{\sqrt{n + 1}} \sin\left(\frac{\pi}{n+1} \right) \\[5pt]
  \frac{2}{\sqrt{n + 1}} \sin\left(\frac{3\pi}{n+1} \right) \\[5pt]
  \vdots \\[5pt]
  \frac{2}{\sqrt{n + 1}} \sin\left(\frac{n\pi}{n+1} \right)
\end{array}
\right], \quad
\mathbf{y} = \left[
\begin{array}{c}
  \frac{2}{\sqrt{n + 1}} \sin\left(\frac{2\pi}{n+1} \right) \\[5pt]
  \frac{2}{\sqrt{n + 1}} \sin\left(\frac{6\pi}{n+1} \right) \\[5pt]
  \vdots \\[5pt]
  \frac{2}{\sqrt{n + 1}} \sin\left(\frac{2n\pi}{n+1} \right)
\end{array}
\right]
\end{equation}
and
\begin{equation}\label{eq:2.5b}
\mathbf{v} = \left[
\begin{array}{c}
  \frac{2}{\sqrt{n + 1}} \sin\left(\frac{2\pi}{n+1} \right) \\[5pt]
  \frac{2}{\sqrt{n + 1}} \sin\left(\frac{4\pi}{n+1} \right) \\[5pt]
  \vdots \\[5pt]
  \frac{2}{\sqrt{n + 1}} \sin\left[\frac{(n - 1)\pi}{n+1} \right]
\end{array}
\right], \quad
\mathbf{w} = \left[
\begin{array}{c}
  \frac{2}{\sqrt{n + 1}} \sin\left(\frac{4\pi}{n+1} \right) \\[5pt]
  \frac{2}{\sqrt{n + 1}} \sin\left(\frac{8\pi}{n+1} \right) \\[5pt]
  \vdots \\[5pt]
  \frac{2}{\sqrt{n + 1}} \sin\left[\frac{2(n - 1)\pi}{n+1} \right]
\end{array}
\right]
\end{equation}
then
\begin{equation*}
\mathbf{H}_{n} = \mathbf{S}_{n} \mathbf{P}_{n} \left[
\begin{array}{cc}
\boldsymbol{\Phi}_{\frac{n+1}{2}}  & \mathbf{O} \\
\mathbf{O} & \boldsymbol{\Psi}_{\frac{n-1}{2}}
\end{array}
\right] \mathbf{P}_{n}^{\top} \mathbf{S}_{n}
\end{equation*}
where $\mathbf{S}_{n}$ is the $n \times n$ matrix \eqref{eq:2.1}, $\mathbf{P}_{n}$ is the $n \times n$ permutation matrix defined by
\begin{equation}\label{eq:2.5c}
\left[\mathbf{P}_{n} \right]_{k,\ell} = \left\{
\begin{array}{l}
1 \; \; \textit{if $k = 2\ell - 1$ or $k = 2\ell - n - 1$} \\[5pt]
0 \; \; \textit{otherwise}
\end{array}
\right.
\end{equation}
and
\begin{gather}
\boldsymbol{\Phi}_{\frac{n+1}{2}} = \mathrm{diag} \left(\lambda_{1},\lambda_{3},\ldots,\lambda_{n} \right) + (c + \xi - a) \mathbf{x} \mathbf{x}^{\top} + (d + \eta - b) \mathbf{x} \mathbf{y}^{\top} + (d + \eta - b) \mathbf{y} \mathbf{x}^{\top} \label{eq:2.5d} \\
\boldsymbol{\Psi}_{\frac{n-1}{2}} = \mathrm{diag} \left(\lambda_{2},\lambda_{4},\ldots,\lambda_{n-1} \right) + (c + \xi - a) \mathbf{v} \mathbf{v}^{\top} + (d + \eta - b) \mathbf{v} \mathbf{w}^{\top} + (d + \eta - b) \mathbf{w} \mathbf{v}^{\top}. \label{eq:2.5e}
\end{gather}
\end{subequations}
\end{lem}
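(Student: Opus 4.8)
The plan is to write $\mathbf{H}_{n}$ as the matrix $\mathbf{\widehat{H}}_{n}$ of \eqref{eq:2.2} corrected by a perturbation of rank at most four supported in the two opposite corners, to diagonalize $\mathbf{\widehat{H}}_{n}$ via Lemma~\ref{lem2}, and then to conjugate by the orthogonal matrix $\mathbf{S}_{n}\mathbf{P}_{n}$; the permutation $\mathbf{P}_{n}$ is chosen precisely so that this perturbation, once rewritten in the new basis, becomes block diagonal with the two announced blocks.

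First compare \eqref{eq:1.1} with \eqref{eq:2.2} entry by entry. The two matrices agree everywhere except in the top-left and bottom-right $2\times 2$ blocks, where \eqref{eq:1.1} carries $\xi,\eta$ in place of $a-c,b-d$; hence, writing $\mathbf{e}_{1},\ldots,\mathbf{e}_{n}$ for the canonical basis of $\mathbb{C}^{n}$,
\[
\mathbf{H}_{n}=\mathbf{\widehat{H}}_{n}+(c+\xi-a)\bigl(\mathbf{e}_{1}\mathbf{e}_{1}^{\top}+\mathbf{e}_{n}\mathbf{e}_{n}^{\top}\bigr)+(d+\eta-b)\bigl(\mathbf{e}_{1}\mathbf{e}_{2}^{\top}+\mathbf{e}_{2}\mathbf{e}_{1}^{\top}+\mathbf{e}_{n-1}\mathbf{e}_{n}^{\top}+\mathbf{e}_{n}\mathbf{e}_{n-1}^{\top}\bigr).
\]
By Lemma~\ref{lem2} one has $\mathbf{\widehat{H}}_{n}=\mathbf{S}_{n}\boldsymbol{\Lambda}_{n}\mathbf{S}_{n}$, and since $\mathbf{S}_{n}$ is symmetric, orthogonal and involutory while $\mathbf{P}_{n}$ is orthogonal, conjugation by $\mathbf{S}_{n}\mathbf{P}_{n}$ yields
\[
\mathbf{P}_{n}^{\top}\mathbf{S}_{n}\mathbf{H}_{n}\mathbf{S}_{n}\mathbf{P}_{n}=\mathbf{P}_{n}^{\top}\boldsymbol{\Lambda}_{n}\mathbf{P}_{n}+(c+\xi-a)\bigl(\mathbf{u}_{1}\mathbf{u}_{1}^{\top}+\mathbf{u}_{n}\mathbf{u}_{n}^{\top}\bigr)+(d+\eta-b)\bigl(\mathbf{u}_{1}\mathbf{u}_{2}^{\top}+\mathbf{u}_{2}\mathbf{u}_{1}^{\top}+\mathbf{u}_{n-1}\mathbf{u}_{n}^{\top}+\mathbf{u}_{n}\mathbf{u}_{n-1}^{\top}\bigr),
\]
where $\mathbf{u}_{j}:=\mathbf{P}_{n}^{\top}\mathbf{S}_{n}\mathbf{e}_{j}$ is the $j$-th column of $\mathbf{S}_{n}$ with its entries permuted by $\mathbf{P}_{n}^{\top}$. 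From \eqref{eq:2.4c} (resp. \eqref{eq:2.5c}) one reads immediately that $\mathbf{P}_{n}^{\top}\boldsymbol{\Lambda}_{n}\mathbf{P}_{n}=\mathrm{diag}(\lambda_{1},\lambda_{3},\ldots)\oplus\mathrm{diag}(\lambda_{2},\lambda_{4},\ldots)$, which is exactly the diagonal part of $\boldsymbol{\Phi}\oplus\boldsymbol{\Psi}$; it only remains to identify the four rank-one pieces.

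The heart of the argument is this last identification. The reflection identity $\sin\!\bigl(\tfrac{k(n+1-j)\pi}{n+1}\bigr)=(-1)^{k+1}\sin\!\bigl(\tfrac{kj\pi}{n+1}\bigr)$ gives, for the columns $\mathbf{s}_{1},\mathbf{s}_{2},\mathbf{s}_{n-1},\mathbf{s}_{n}$ of $\mathbf{S}_{n}$, the relations $[\mathbf{s}_{n}]_{k}=(-1)^{k+1}[\mathbf{s}_{1}]_{k}$ and $[\mathbf{s}_{n-1}]_{k}=(-1)^{k+1}[\mathbf{s}_{2}]_{k}$. Since $\mathbf{P}_{n}^{\top}$ sends the odd-indexed entries of a vector to its first half and the even-indexed ones to its second half, along those entries the sign $(-1)^{k+1}$ is $+1$ on the first half and $-1$ on the second half. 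Evaluating the explicit sine values against \eqref{eq:2.4a}--\eqref{eq:2.4b} (resp. \eqref{eq:2.5a}--\eqref{eq:2.5b}), this shows that $\mathbf{u}_{1}$ has first half $\tfrac1{\sqrt2}\mathbf{x}$ and second half $\tfrac1{\sqrt2}\mathbf{v}$, that $\mathbf{u}_{2}$ has first half $\tfrac1{\sqrt2}\mathbf{y}$ and second half $\tfrac1{\sqrt2}\mathbf{w}$, that $\mathbf{u}_{n}$ has the same two halves as $\mathbf{u}_{1}$ with its second half negated, and that $\mathbf{u}_{n-1}$ has the same two halves as $\mathbf{u}_{2}$ with its second half negated. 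Assembling $\mathbf{u}_{1}\mathbf{u}_{1}^{\top}+\mathbf{u}_{n}\mathbf{u}_{n}^{\top}$ and $\mathbf{u}_{1}\mathbf{u}_{2}^{\top}+\mathbf{u}_{2}\mathbf{u}_{1}^{\top}+\mathbf{u}_{n-1}\mathbf{u}_{n}^{\top}+\mathbf{u}_{n}\mathbf{u}_{n-1}^{\top}$, the cross terms between the two halves cancel because of the opposite signs; the upper-left blocks collapse --- a factor $2$ from the two corners meeting the $\tfrac12$ from the normalization --- to $\mathbf{x}\mathbf{x}^{\top}$ and $\mathbf{x}\mathbf{y}^{\top}+\mathbf{y}\mathbf{x}^{\top}$, and the lower-right blocks to $\mathbf{v}\mathbf{v}^{\top}$ and $\mathbf{v}\mathbf{w}^{\top}+\mathbf{w}\mathbf{v}^{\top}$. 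Substituting these into the conjugation identity above produces exactly $\boldsymbol{\Phi}\oplus\boldsymbol{\Psi}$ of \eqref{eq:2.4d}--\eqref{eq:2.4e} (resp. \eqref{eq:2.5d}--\eqref{eq:2.5e}), and multiplying back on both sides by $\mathbf{S}_{n}\mathbf{P}_{n}$ gives the claimed factorization of $\mathbf{H}_{n}$. The odd case runs identically, the only differences being the block sizes $\tfrac{n+1}2$ and $\tfrac{n-1}2$ and the upper limits of the sines listed in \eqref{eq:2.5a}--\eqref{eq:2.5b}.

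The only real obstacle is the combinatorial bookkeeping: one must keep careful track of how $\mathbf{P}_{n}$ interleaves the indices $1,\ldots,n$ into odds-then-evens, verify that the parities of the entries of $\mathbf{s}_{n-1}$ and $\mathbf{s}_{n}$ line up so that every cross-block contribution vanishes, and check that the surviving halves reproduce the vectors $\mathbf{x},\mathbf{y},\mathbf{v},\mathbf{w}$ with the precise normalizing constant $\tfrac2{\sqrt{n+1}}$ --- it is this constant, together with the fact that each corner is counted exactly once, that makes the coefficients in \eqref{eq:2.4d}--\eqref{eq:2.4e} and \eqref{eq:2.5d}--\eqref{eq:2.5e} come out as $c+\xi-a$ and $d+\eta-b$ themselves. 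Everything else is a direct computation.
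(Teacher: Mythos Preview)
Your proof is correct and follows essentially the same route as the paper: write $\mathbf{H}_n=\mathbf{\widehat{H}}_n$ plus a rank-$\leq 4$ corner correction, apply Lemma~\ref{lem2}, and then use the parity symmetry of the columns of $\mathbf{S}_n$ (your reflection identity, the paper's factor $[1+(-1)^{k+\ell}]$) to see that after the odd--even permutation $\mathbf{P}_n$ the off-diagonal blocks vanish. The only cosmetic difference is that the paper computes $\mathbf{S}_n\mathbf{\widehat{E}}_n\mathbf{S}_n$ and $\mathbf{S}_n\mathbf{\widehat{F}}_n\mathbf{S}_n$ entrywise and then permutes, whereas you track the permuted column vectors $\mathbf{u}_j=\mathbf{P}_n^\top\mathbf{S}_n\mathbf{e}_j$ and assemble the rank-one pieces directly; both unwind to the same cancellation.
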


\begin{proof}
Consider an integer $n \geqslant 5$, $a,b,c,d, \xi, \eta \in \mathbb{C}$, $\lambda_{k}$, $k = 1,\ldots,n$ given by \eqref{eq:2.3} and $\mathbf{H}_{n}$ the $n \times n$ matrix \eqref{eq:1.1}. Setting $\theta := c + \xi - a$, $\vartheta := d + \eta - b$,
\begin{equation*}
    \mathbf{\widehat{E}}_{n} = \left[
    \begin{array}{ccccc}
    c + \xi - a & 0 & \ldots & \ldots & 0 \\
    0 & 0 & \ddots & & \vdots \\
    \vdots & \ddots & \ddots & \ddots & \vdots \\
    \vdots & & \ddots & 0 & 0 \\
    0 & \ldots & \ldots & 0 & c + \xi - a
    \end{array}
    \right]
\end{equation*}
and
\begin{equation*}
    \mathbf{\widehat{F}}_{n} = \left[
    \begin{array}{cccccc}
    0 & d + \eta - b & 0 & \ldots & \ldots & 0 \\
    d + \eta - b & 0 & 0 & \ddots & & \vdots \\
    0 & 0 & \ddots & \ddots & \ddots & \vdots \\
    \vdots & \ddots & \ddots & \ddots & 0 & 0 \\
    \vdots & & \ddots & 0 & 0 & d + \eta - b \\
    0 & \ldots & \ldots & 0 & d + \eta - b & 0
    \end{array}
    \right]
\end{equation*}
we have, from Lemma~\ref{lem2},
\begin{equation*}
    \mathbf{S}_{n} \mathbf{H}_{n} \mathbf{S}_{n} = \mathbf{S}_{n} \left(\mathbf{\widehat{H}}_{n} + \mathbf{\widehat{E}}_{n} + \mathbf{\widehat{F}}_{n} \right) \mathbf{S}_{n} = \boldsymbol{\Lambda}_{n} + \mathbf{G}_{n} + \mathbf{K}_{n}
\end{equation*}
where $\mathbf{S}_{n}$ is the $n \times n$ matrix \eqref{eq:2.1}, $\mathbf{\widehat{H}}_{n}$ is the $n \times n$ matrix \eqref{eq:2.2},
\begin{gather*}
    \boldsymbol{\Lambda}_{n} = \mathrm{diag} \left(\lambda_{1},\ldots,\lambda_{n} \right), \\
    \left[\mathbf{G}_{n} \right]_{k,\ell} = \frac{2 \theta}{n+1}\sin \left(\frac{k\pi}{n+1} \right) \sin \left(\frac{\ell\pi}{n+1} \right) \left[1 + (-1)^{k + \ell} \right]
\end{gather*}
and
\begin{equation*}
    \left[\mathbf{K}_{n} \right]_{k,\ell} = \frac{2\vartheta}{n+1} \left[\sin \left(\frac{k\pi}{n+1} \right) \sin \left(\frac{2\ell\pi}{n+1} \right) + \sin \left(\frac{2k\pi}{n+1} \right) \sin \left(\frac{\ell\pi}{n+1} \right) \right] \left[1 + (-1)^{k + \ell} \right].
\end{equation*}
Since $\left[\mathbf{G}_{n} \right]_{k,\ell} = \left[\mathbf{K}_{n} \right]_{k,\ell} = 0$ whenever $k + \ell$ is odd, we can permute rows and columns of $\boldsymbol{\Lambda}_{n} + \mathbf{G}_{n} + \mathbf{K}_{n}$ according to the permutation matrices \eqref{eq:2.4c} and \eqref{eq:2.5c}, yielding: for $n$ even,
\begin{equation*}
    \mathbf{H}_{n} = \mathbf{S}_{n} \mathbf{P}_{n} \left[
    \begin{array}{cc}
    \boldsymbol{\Upsilon}_{\frac{n}{2}} + \theta \mathbf{x} \mathbf{x}^{\top} + \vartheta \mathbf{x} \mathbf{y}^{\top} + \vartheta \mathbf{y} \mathbf{x}^{\top} & \mathbf{O} \\
    \mathbf{O} & \boldsymbol{\Delta}_{\frac{n}{2}} + \theta \mathbf{v} \mathbf{v}^{\top} + \vartheta \mathbf{v} \mathbf{w}^{\top} + \vartheta \mathbf{w} \mathbf{v}^{\top}
    \end{array}
    \right] \mathbf{P}_{n}^{\top} \mathbf{S}_{n},
\end{equation*}
where $\mathbf{P}_{n}$ is the matrix \eqref{eq:2.4c}, $\boldsymbol{\Upsilon}_{\frac{n}{2}} = \mathrm{diag}(\lambda_{1},\lambda_{3},\ldots,\lambda_{n-1})$, $\boldsymbol{\Delta}_{\frac{n}{2}} = \mathrm{diag}(\lambda_{2},\lambda_{4},\ldots,\lambda_{n})$ and $\mathbf{x}$, $\mathbf{y}$ are given by \eqref{eq:2.4a}; for $n$ odd,
\begin{equation*}
    \mathbf{H}_{n} = \mathbf{S}_{n} \mathbf{P}_{n} \left[
    \begin{array}{cc}
    \boldsymbol{\Upsilon}_{\frac{n+1}{2}} + \theta \mathbf{x} \mathbf{x}^{\top} + \vartheta \mathbf{x} \mathbf{y}^{\top} + \vartheta \mathbf{y} \mathbf{x}^{\top} & \mathbf{O} \\
    \mathbf{O} & \boldsymbol{\Delta}_{\frac{n-1}{2}} + \theta \mathbf{v} \mathbf{v}^{\top} + \vartheta \mathbf{v} \mathbf{w}^{\top} + \vartheta \mathbf{w} \mathbf{v}^{\top}
    \end{array}
    \right] \mathbf{P}_{n}^{\top} \mathbf{S}_{n},
\end{equation*}
with $\mathbf{P}_{n}$ defined in \eqref{eq:2.5c}, $\boldsymbol{\Upsilon}_{\frac{n+1}{2}} = \mathrm{diag}(\lambda_{1},\lambda_{3},\ldots,\lambda_{n})$, $\boldsymbol{\Delta}_{\frac{n-1}{2}} = \mathrm{diag}(\lambda_{2},\lambda_{4},\ldots,\lambda_{n-1})$ and $\mathbf{v}$, $\mathbf{w}$ defined by \eqref{eq:2.5a}. The proof is completed.
\end{proof}

\begin{rem}
Let us point out that the decomposition for real heptadiagonal symmetric Toeplitz matrices established in Proposition 3.1 of \cite{Bini83} at the expense of the bordering technique is no more useful for matrices having the shape \eqref{eq:1.1}. As consequence, some results stated by these authors will be necessarily extended, particularly, the referred decomposition and a formula to compute the determinant of real heptadiagonal symmetric Toeplitz matrices (Corollary 3.1 of \cite{Bini83}).
\end{rem}

\bigskip

\section{Main results}

\subsection{Determinant of $\mathbf{H}_{n}$}

\indent

The orthogonal block diagonalization established in Lemma~\ref{lem3} will lead us to an explicit formula for the determinant of the matrix $\mathbf{H}_{n}$.


\begin{thr}\label{thr1}
Let $n \geqslant 5$ be an integer, $a,b,c,d, \xi, \eta \in \mathbb{C}$, $\lambda_{k}$, $k = 1,\ldots,n$ be given by \eqref{eq:2.3}, $x_{k} = \sin \left(\frac{k \pi}{n+1} \right)$, $k=1,\ldots,2n$ and $\mathbf{H}_{n}$ the $n \times n$ matrix \eqref{eq:1.1}. If $\theta := c + \xi - a$, $\vartheta := d + \eta - b$ and \\

\noindent \textnormal{(a)} $n$ is even then
\begin{equation*}
\begin{split}
\det & (\mathbf{H}_{n}) = \Bigg[\prod_{j=1}^{\frac{n}{2}} \lambda_{2j} + \sum_{k=1}^{\frac{n}{2}} \prod_{\substack{j=1 \\ j \neq k}}^{\frac{n}{2}} \lambda_{2j} \tfrac{4 \theta x_{2k}^{2} + 8 \vartheta x_{2k} x_{4k}}{(n+1)} - \sum_{1 \leqslant k < \ell \leqslant \frac{n}{2}} \prod_{\substack{j=1 \\ j \neq k, \ell}}^{\frac{n}{2}} \lambda_{2j} \tfrac{16\vartheta^{2} \left(x_{2k} x_{4 \ell} - x_{2 \ell} x_{4k} \right)^{2}}{(n+1)^{2}} \Bigg] \times \\
&\Bigg[\prod_{j=1}^{\frac{n}{2}} \lambda_{2j-1} + \sum_{k=1}^{\frac{n}{2}} \prod_{\substack{j=1 \\ j \neq k}}^{\frac{n}{2}} \lambda_{2j-1} \tfrac{4 \theta x_{2k-1}^{2} + 8 \vartheta x_{2k-1} x_{4k-2}}{(n+1)} - \sum_{1 \leqslant k < \ell \leqslant \frac{n}{2}} \prod_{\substack{j=1 \\ j \neq k, \ell}}^{\frac{n}{2}} \lambda_{2j-1} \tfrac{16\vartheta^{2} \left(x_{2k-1} x_{4 \ell - 2} - x_{2 \ell - 1} x_{4k-2} \right)^{2}}{(n+1)^{2}} \Bigg].
\end{split}
\end{equation*}

\medskip

\noindent \textnormal{(b)} $n$ is odd then
\begin{equation*}
\begin{split}
& \det (\mathbf{H}_{n}) = \Bigg[\prod_{j=1}^{\frac{n-1}{2}} \lambda_{2j} + \sum_{k=1}^{\frac{n-1}{2}} \prod_{\substack{j=1 \\ j \neq k}}^{\frac{n-1}{2}} \lambda_{2j} \tfrac{4 \theta x_{2k}^{2} + 8 \vartheta x_{2k} x_{4k}}{(n+1)} - \sum_{1 \leqslant k < \ell \leqslant \frac{n-1}{2}} \prod_{\substack{j=1 \\ j \neq k, \ell}}^{\frac{n-1}{2}} \lambda_{2j} \tfrac{16\vartheta^{2} \left(x_{2k} x_{4 \ell} - x_{2 \ell} x_{4k} \right)^{2}}{(n+1)^{2}} \Bigg] \times \\
&\Bigg[\prod_{j=1}^{\frac{n+1}{2}} \lambda_{2j-1} + \sum_{k=1}^{\frac{n+1}{2}} \prod_{\substack{j=1 \\ j \neq k}}^{\frac{n+1}{2}} \lambda_{2j-1} \tfrac{4 \theta x_{2k-1}^{2} + 8 \vartheta x_{2k-1} x_{4k-2}}{(n+1)} - \sum_{1 \leqslant k < \ell \leqslant \frac{n+1}{2}} \prod_{\substack{j=1 \\ j \neq k, \ell}}^{\frac{n+1}{2}} \lambda_{2j-1} \tfrac{16\vartheta^{2} \left(x_{2k-1} x_{4 \ell - 2} - x_{2 \ell - 1} x_{4k-2} \right)^{2}}{(n+1)^{2}} \Bigg].
\end{split}
\end{equation*}
\end{thr}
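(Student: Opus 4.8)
The plan is to build on the orthogonal block diagonalisation of Lemma~\ref{lem3}. Since $\mathbf{S}_{n}$ is orthogonal and involutory we have $\det(\mathbf{S}_{n})^{2}=\det(\mathbf{S}_{n}^{2})=1$, and since $\mathbf{P}_{n}$ is a permutation matrix $\det(\mathbf{P}_{n})\det(\mathbf{P}_{n}^{\top})=\det(\mathbf{P}_{n}\mathbf{P}_{n}^{\top})=1$; hence, in both parities,
\begin{equation*}
\det(\mathbf{H}_{n}) = \det(\boldsymbol{\Phi})\,\det(\boldsymbol{\Psi}),
\end{equation*}
so everything reduces to computing the determinant of each diagonal block. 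Both blocks have exactly the same algebraic form: a diagonal matrix $\mathbf{D}=\mathrm{diag}(\mu_{1},\ldots,\mu_{m})$ perturbed by $\theta\,\mathbf{p}\mathbf{p}^{\top}+\vartheta\,\mathbf{p}\mathbf{q}^{\top}+\vartheta\,\mathbf{q}\mathbf{p}^{\top}$, where $(\mathbf{p},\mathbf{q})$ is one of the pairs $(\mathbf{x},\mathbf{y})$, $(\mathbf{v},\mathbf{w})$ of \eqref{eq:2.4a}--\eqref{eq:2.4b} or \eqref{eq:2.5a}--\eqref{eq:2.5b}, and $(\mu_{j})$ is the corresponding interlaced list of the $\lambda_{k}$'s. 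So I would first establish once and for all the auxiliary identity
\begin{equation*}
\begin{split}
\det\!\left(\mathbf{D} + \theta\,\mathbf{p}\mathbf{p}^{\top} + \vartheta\,\mathbf{p}\mathbf{q}^{\top} + \vartheta\,\mathbf{q}\mathbf{p}^{\top}\right)
&= \prod_{j=1}^{m}\mu_{j}
+ \sum_{k=1}^{m}\left(\theta p_{k}^{2} + 2\vartheta p_{k}q_{k}\right)\prod_{\substack{j=1\\ j\neq k}}^{m}\mu_{j} \\
&\quad - \vartheta^{2}\sum_{1\leqslant k<\ell\leqslant m}\left(p_{k}q_{\ell}-p_{\ell}q_{k}\right)^{2}\prod_{\substack{j=1\\ j\neq k,\ell}}^{m}\mu_{j},
\end{split}
\end{equation*}
which holds for arbitrary scalars (no invertibility of $\mathbf{D}$ is assumed), and afterwards substitute the explicit trigonometric entries.

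The auxiliary identity I would prove by expanding the determinant through multilinearity in the columns. The $j$-th column of the perturbed matrix equals $\mu_{j}\mathbf{e}_{j} + (\theta p_{j}+\vartheta q_{j})\mathbf{p} + \vartheta p_{j}\mathbf{q}$, a linear combination of the three vectors $\mathbf{e}_{j}$, $\mathbf{p}$, $\mathbf{q}$; expanding each column this way produces a sum of determinants in each of which every column has been replaced by one of those vectors. Any such determinant vanishes as soon as $\mathbf{p}$ is used in two distinct columns, and likewise for $\mathbf{q}$, so only three families of terms survive: the one where every column keeps $\mu_{j}\mathbf{e}_{j}$ (contributing $\prod_{j}\mu_{j}$); the ones where exactly one column $k$ is replaced, by $\mathbf{p}$ or by $\mathbf{q}$; and the ones where exactly two columns $k<\ell$ are replaced, one by $\mathbf{p}$ and the other by $\mathbf{q}$. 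A matrix that is diagonal outside one column $k$ has determinant $\big(\prod_{j\neq k}\mu_{j}\big)$ times the $(k,k)$ entry of that column, which yields $(\theta p_{k}+\vartheta q_{k})p_{k} + \vartheta p_{k}q_{k} = \theta p_{k}^{2} + 2\vartheta p_{k}q_{k}$ after adding the $\mathbf{p}$- and $\mathbf{q}$-sub-cases; a matrix that is diagonal outside two columns $k<\ell$ has determinant $\big(\prod_{j\neq k,\ell}\mu_{j}\big)$ times the $2\times 2$ minor on rows and columns $\{k,\ell\}$, with sign $+1$. Summing, for the pair $\{k,\ell\}$, the two admissible orientations — column $k$ carrying $\mathbf{p}$ and column $\ell$ carrying $\mathbf{q}$, or the reverse — a short cancellation of the shape $\vartheta p_{\ell}(\theta p_{k}+\vartheta q_{k}) - \vartheta p_{k}(\theta p_{\ell}+\vartheta q_{\ell}) = -\vartheta^{2}(p_{k}q_{\ell}-p_{\ell}q_{k})$ collapses the two minor contributions into $-\vartheta^{2}(p_{k}q_{\ell}-p_{\ell}q_{k})^{2}\prod_{j\neq k,\ell}\mu_{j}$. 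This gives the auxiliary identity.

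It then remains to read off the entries. For $n$ even the block $\boldsymbol{\Phi}_{n/2}$ corresponds to $\mu_{j}=\lambda_{2j-1}$, $p_{k}=\tfrac{2}{\sqrt{n+1}}x_{2k-1}$, $q_{k}=\tfrac{2}{\sqrt{n+1}}x_{4k-2}$, whence $\theta p_{k}^{2}+2\vartheta p_{k}q_{k}=\tfrac{4\theta x_{2k-1}^{2}+8\vartheta x_{2k-1}x_{4k-2}}{n+1}$ and $\vartheta^{2}(p_{k}q_{\ell}-p_{\ell}q_{k})^{2}=\tfrac{16\vartheta^{2}(x_{2k-1}x_{4\ell-2}-x_{2\ell-1}x_{4k-2})^{2}}{(n+1)^{2}}$, which is precisely the second bracket in part~(a); the block $\boldsymbol{\Psi}_{n/2}$ corresponds to $\mu_{j}=\lambda_{2j}$, $p_{k}=\tfrac{2}{\sqrt{n+1}}x_{2k}$, $q_{k}=\tfrac{2}{\sqrt{n+1}}x_{4k}$, giving the first bracket, and multiplying the two yields~(a). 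Part~(b) follows the same way, now with block sizes $\tfrac{n+1}{2}$ and $\tfrac{n-1}{2}$ and the vectors taken from \eqref{eq:2.5a}--\eqref{eq:2.5b}. The only point that needs care is the cancellation in the two-column family: one must track the signs of the two orientations of each pair so that the $\theta$-terms disappear and exactly $-\vartheta^{2}(p_{k}q_{\ell}-p_{\ell}q_{k})^{2}$ survives; once that is checked, all that is left is the substitution of the sine values, which is routine.
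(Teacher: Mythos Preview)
Your argument is correct. Both you and the paper start from Lemma~\ref{lem3}, obtaining $\det(\mathbf{H}_{n})=\det(\boldsymbol{\Phi})\det(\boldsymbol{\Psi})$, and then compute the determinant of each diagonal-plus-rank-two block. The difference lies in how that block determinant is evaluated. The paper invokes Miller's determinant formula for successive rank-one updates, which expresses $\det\big(\boldsymbol{\Upsilon}+\theta\mathbf{x}\mathbf{x}^{\top}+\vartheta\mathbf{x}\mathbf{y}^{\top}+\vartheta\mathbf{y}\mathbf{x}^{\top}\big)$ as $\det(\boldsymbol{\Upsilon})$ times a factor built from $\boldsymbol{\Upsilon}^{-1}$; this needs all $\lambda_{k}\neq 0$ together with the auxiliary non-degeneracy conditions \eqref{eq:3.1a}--\eqref{eq:3.2c}, and the paper then removes these hypotheses by noting that both sides of the resulting identity are polynomials in $a,b,c,d,\xi,\eta$. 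Your route is more elementary: the multilinear column expansion yields the auxiliary identity directly as a polynomial identity, with no invertibility assumption and hence no continuation step. What the paper's approach buys is methodological unity --- the same Miller formula is the engine behind the inverse computation later in the paper --- whereas your approach is self-contained and slightly shorter for this particular theorem. Your tracking of the two orientations in the two-column family and the cancellation of the $\theta$-terms is accurate; that is indeed the one place where the bookkeeping matters.
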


\begin{proof}
Since both assertions can be proven in the same way, we only prove (a). Consider an even integer $n \geqslant 5$, $a,b,c,d, \xi, \eta \in \mathbb{C}$, $x_{k} = \sin \left(\frac{k \pi}{n+1} \right)$, $k=1,\ldots,2n$, $\lambda_{k}$, $k = 1,\ldots,n$ given by \eqref{eq:2.3}, $\theta := c + \xi - a$, $\vartheta := d + \eta - b$ and the notations used in Lemma~\ref{lem3}. The determinant formula for block-triangular matrices (see \cite{Harville97}, page $185$) and Lemma~\ref{lem3} ensure $\det(\mathbf{H}_{n}) = \det \left(\boldsymbol{\Phi}_{\frac{n}{2}} \right) \det \left(\boldsymbol{\Psi}_{\frac{n}{2}} \right)$. We shall first assume $\lambda_{k} \neq 0$ for all $k=1,\ldots,n$,
\begin{subequations}
\begin{gather}
\frac{4\theta}{n+1} \sum_{k=1}^{\frac{n}{2}} \frac{x_{2k-1}^{2}}{\lambda_{2k-1}} \neq -1 \label{eq:3.1a} \\
\frac{4\theta}{n+1} \sum_{k=1}^{\frac{n}{2}} \frac{x_{2k-1}^{2}}{\lambda_{2k-1}} + \frac{4\vartheta}{n+1} \sum_{k=1}^{\frac{n}{2}} \frac{x_{2k-1}x_{4k-2}}{\lambda_{2k-1}} \neq -1 \label{eq:3.1b} \\
\sum_{k=1}^{\frac{n}{2}} \frac{4\theta x_{2k-1}^{2} + 8\vartheta x_{2k-1} x_{4k-2}}{(n+1) \lambda_{2k-1}} - \frac{16\vartheta^{2}}{(n+1)^{2}} \sum_{1 \leqslant k < \ell \leqslant \frac{n}{2}} \frac{\left(x_{2k-1} x_{4 \ell - 2} - x_{2 \ell - 1} x_{4k-2} \right)^{2}}{\lambda_{2k-1} \lambda_{2 \ell - 1}} \neq -1 \label{eq:3.1c}
\end{gather}
\end{subequations}
and
\begin{subequations}
\begin{gather}
\frac{4\theta}{n+1} \sum_{k=1}^{\frac{n}{2}} \frac{x_{2k}^{2}}{\lambda_{2k}} \neq -1 \label{eq:3.2a} \\
\frac{4\theta}{n+1} \sum_{k=1}^{\frac{n}{2}} \frac{x_{2k}^{2}}{\lambda_{2k}} + \frac{4\vartheta}{n+1} \sum_{k=1}^{\frac{n}{2}} \frac{x_{2k}x_{4k}}{\lambda_{2k}} \neq -1 \label{eq:3.2b} \\
\sum_{k=1}^{\frac{n}{2}} \frac{4\theta x_{2k}^{2} + 8\vartheta x_{2k} x_{4k}}{(n+1) \lambda_{2k}} - \frac{16\vartheta^{2}}{(n+1)^{2}} \sum_{1 \leqslant k < \ell \leqslant \frac{n}{2}} \frac{\left(x_{2k} x_{4 \ell} - x_{2 \ell} x_{4k} \right)^{2}}{\lambda_{2k} \lambda_{2 \ell}} \neq -1. \label{eq:3.2c}
\end{gather}
\end{subequations}
Putting $\boldsymbol{\Upsilon}_{\frac{n}{2}} := \mathrm{diag} \left(\lambda_{1},\lambda_{3},\ldots,\lambda_{n-1} \right)$ and $\boldsymbol{\Delta}_{\frac{n}{2}} := \mathrm{diag} \left(\lambda_{2},\lambda_{4},\ldots,\lambda_{n} \right)$, we have
\begin{align*}
\det \left(\boldsymbol{\Phi}_{\frac{n}{2}} \right) &= \det \left(\boldsymbol{\Upsilon}_{\frac{n}{2}} + \theta \mathbf{x} \mathbf{x}^{\top} + \vartheta \mathbf{x} \mathbf{y}^{\top} + \vartheta \mathbf{y} \mathbf{x}^{\top} \right) \\
& = \bigg[1 + \theta \mathbf{x}^{\top} \boldsymbol{\Upsilon}_{\frac{n}{2}}^{-1} \mathbf{x} + 2 \vartheta \mathbf{x}^{\top} \boldsymbol{\Upsilon}_{\frac{n}{2}}^{-1} \mathbf{y} + \vartheta^{2} \left(\mathbf{x}^{\top} \boldsymbol{\Upsilon}_{\frac{n}{2}}^{-1} \mathbf{y} \right)^{2} - \vartheta^{2} \left(\mathbf{x}^{\top} \boldsymbol{\Upsilon}_{\frac{n}{2}}^{-1} \mathbf{x} \right) \left(\mathbf{y}^{\top} \boldsymbol{\Upsilon}_{\frac{n}{2}}^{-1} \mathbf{y} \right) \bigg] \det \left(\boldsymbol{\Upsilon}_{\frac{n}{2}} \right) \\
& = \Bigg[\prod_{j=1}^{\frac{n}{2}} \lambda_{2j-1} + \sum_{k=1}^{\frac{n}{2}} \prod_{\substack{j=1 \\ j \neq k}}^{\frac{n}{2}} \lambda_{2j-1} \tfrac{4 \theta x_{2k-1}^{2} + 8 \vartheta x_{2k-1} x_{4k-2}}{(n+1)} + \Bigg. \\
& \hspace*{6.2cm} \Bigg. - \sum_{1 \leqslant k < \ell \leqslant \frac{n}{2}} \prod_{\substack{j=1 \\ j \neq k, \ell}}^{\frac{n}{2}} \lambda_{2j-1} \tfrac{16\vartheta^{2} \left(x_{2k-1} x_{4 \ell - 2} - x_{2 \ell - 1} x_{4k-2} \right)^{2}}{(n+1)^{2}} \Bigg]
\end{align*}
and
\begin{align*}
\det & \left(\boldsymbol{\Psi}_{\frac{n}{2}} \right) = \\
& = \det \left(\boldsymbol{\Delta}_{\frac{n}{2}} + \theta \mathbf{v} \mathbf{v}^{\top} + \vartheta \mathbf{v} \mathbf{w}^{\top} + \vartheta \mathbf{w} \mathbf{v}^{\top} \right) \\
& = \bigg[1 + \theta \mathbf{v}^{\top} \boldsymbol{\Delta}_{\frac{n}{2}}^{-1} \mathbf{v} + 2 \vartheta \mathbf{v}^{\top} \boldsymbol{\Delta}_{\frac{n}{2}}^{-1} \mathbf{w} + \vartheta^{2} \left(\mathbf{v}^{\top} \boldsymbol{\Delta}_{\frac{n}{2}}^{-1} \mathbf{w} \right)^{2} - \vartheta^{2} \left(\mathbf{v}^{\top} \boldsymbol{\Delta}_{\frac{n}{2}}^{-1} \mathbf{v} \right) \left(\mathbf{w}^{\top} \boldsymbol{\Delta}_{\frac{n}{2}}^{-1} \mathbf{w} \right) \bigg] \det \left(\boldsymbol{\Delta}_{\frac{n}{2}} \right) \\
& = \Bigg[\prod_{j=1}^{\frac{n}{2}} \lambda_{2j} + \sum_{k=1}^{\frac{n}{2}} \prod_{\substack{j=1 \\ j \neq k}}^{\frac{n}{2}} \lambda_{2j} \tfrac{4 \theta x_{2k}^{2} + 8 \vartheta x_{2k} x_{4k}}{(n+1)} - \sum_{1 \leqslant k < \ell \leqslant \frac{n}{2}} \prod_{\substack{j=1 \\ j \neq k, \ell}}^{\frac{n}{2}} \lambda_{2j} \tfrac{16\vartheta^{2} \left(x_{2k} x_{4 \ell} - x_{2 \ell} x_{4k} \right)^{2}}{(n+1)^{2}} \Bigg]
\end{align*}
(see \cite{Miller81}, page $69$ and $70$), i.e.
\begin{equation}\label{eq:3.3}
\begin{split}
& \det (\mathbf{H}_{n}) = \Bigg[\prod_{j=1}^{\frac{n}{2}} \lambda_{2j} + \sum_{k=1}^{\frac{n}{2}} \prod_{\substack{j=1 \\ j \neq k}}^{\frac{n}{2}} \lambda_{2j} \tfrac{4 \theta x_{2k}^{2} + 8 \vartheta x_{2k} x_{4k}}{(n+1)} - \sum_{1 \leqslant k < \ell \leqslant \frac{n}{2}} \prod_{\substack{j=1 \\ j \neq k, \ell}}^{\frac{n}{2}} \lambda_{2j} \tfrac{16\vartheta^{2} \left(x_{2k} x_{4 \ell} - x_{2 \ell} x_{4k} \right)^{2}}{(n+1)^{2}} \Bigg] \cdot \\
&\Bigg[\prod_{j=1}^{\frac{n}{2}} \lambda_{2j-1} + \sum_{k=1}^{\frac{n}{2}} \prod_{\substack{j=1 \\ j \neq k}}^{\frac{n}{2}} \lambda_{2j-1} \tfrac{4 \theta x_{2k-1}^{2} + 8 \vartheta x_{2k-1} x_{4k-2}}{(n+1)} - \sum_{1 \leqslant k < \ell \leqslant \frac{n}{2}} \prod_{\substack{j=1 \\ j \neq k, \ell}}^{\frac{n}{2}} \lambda_{2j-1} \tfrac{16\vartheta^{2} \left(x_{2k-1} x_{4 \ell - 2} - x_{2 \ell - 1} x_{4k-2} \right)^{2}}{(n+1)^{2}} \Bigg].
\end{split}
\end{equation}
Since both sides of \eqref{eq:3.3} are polynomials in the variables $a,b,c,d, \xi, \eta$, conditions \eqref{eq:3.1a}, \eqref{eq:3.1b}, \eqref{eq:3.1c}, \eqref{eq:3.2a}, \eqref{eq:3.2b}, \eqref{eq:3.2c} as well as $\lambda_{k} \neq 0$ can be dropped and \eqref{eq:3.3} is valid more generally.
\end{proof}

\subsection{Eigenvalue localization for $\mathbf{H}_{n}$}

\indent

The next lemma will allow us to express the eigenvalues of key matrices in this paper as the zeros of explicit rational functions providing, additionally, explicit upper and lower bounds for each one. Throughout, $\norm{\; \cdot \; }$ will denote the Euclidean norm.


\begin{lem}\label{lem4}
Let $n \geqslant 5$ be an integer, $a,b,c,d, \xi, \eta \in \mathbb{R}$ and $\lambda_{k}$, $k = 1,\ldots,n$ be given by \eqref{eq:2.3}.

\medskip

\noindent \textnormal{(a)} If $n$ is even,

\begin{itemize}
\begin{subequations}
\item[\textnormal{i.}] $\mathbf{x}, \mathbf{y}$ are given by \eqref{eq:2.4a} and the eigenvalues of
    \begin{equation}\label{eq:3.4a}
    \mathrm{diag} \left(\lambda_{1},\lambda_{3},\ldots,\lambda_{n-1} \right) + (c + \xi - a) \mathbf{x} \mathbf{x}^{\top} + (d + \eta - b) \mathbf{x} \mathbf{y}^{\top} + (d + \eta - b) \mathbf{y} \mathbf{x}^{\top}
    \end{equation}
    are not of the form $a + 2b \cos \left[\frac{(2k-1)\pi}{n+1} \right] + 2c \cos \left[\frac{2(2k-1) \pi}{n+1} \right] + 2d \cos \left[\frac{3(2k-1) \pi}{n+1} \right]$, $k = 1,\ldots,\frac{n}{2}$ then the eigenvalues of \eqref{eq:3.4a} are precisely the zeros of the rational function
    \begin{equation}\label{eq:3.4b}
    \begin{split}
    f(t) &= 1 + \frac{4}{n+1} \sum_{k=1}^{\frac{n}{2}} \frac{(c + \xi - a) \sin^{2} \left[\frac{(2k - 1)\pi}{n+1} \right] + 2(d + \eta - b)\sin \left[\frac{(2k - 1)\pi}{n+1} \right] \sin \left[\frac{(4k - 2)\pi}{n+1} \right]}{\lambda_{2k - 1} - t} + \\
    & - \frac{16(d + \eta - b)^{2}}{(n+1)^{2}} \sum_{1 \leqslant k < \ell \leqslant \frac{n}{2}} \frac{\left\{\sin \left[\frac{(2k - 1)\pi}{n+1} \right] \sin \left[\frac{(4 \ell - 2)\pi}{n+1} \right] - \sin \left[\frac{(4k - 2)\pi}{n+1} \right] \sin \left[\frac{(2 \ell - 1)\pi}{n+1} \right] \right\}^{2}}{(\lambda_{2k - 1} - t)(\lambda_{2\ell - 1} - t)}.
    \end{split}
    \end{equation}
    Moreover, if $\mu_{1} \leqslant \mu_{2} \leqslant \ldots \leqslant \mu_{\frac{n}{2}}$ are the eigenvalues of \eqref{eq:3.4a} and $\lambda_{\tau(1)} \leqslant \lambda_{\tau(3)} \leqslant \ldots \leqslant \lambda_{\tau(n-1)}$ are arranged in non-decreasing order by some bijection $\tau$ defined in $\{1,3, \ldots, n-1\}$ then
    \begin{equation}\label{eq:3.4c}
    \lambda_{\tau(2k-1)} + \tfrac{(c + \xi - a) - \sqrt{(c + \xi - a)^{2} + 4(d + \eta - b)^{2}}}{2} \leqslant \mu_{k} \leqslant \lambda_{\tau(2k-1)} + \tfrac{(c + \xi - a) + \sqrt{(c + \xi - a)^{2} + 4(d + \eta - b)^{2}}}{2}
    \end{equation}
    for each $k=1,\ldots,\tfrac{n}{2}$.
\end{subequations}

\medskip

\begin{subequations}
\item[\textnormal{ii.}] $\mathbf{v}, \mathbf{w}$ are given by \eqref{eq:2.4b} and the eigenvalues of
    \begin{equation}\label{eq:3.5a}
    \mathrm{diag} \left(\lambda_{2},\lambda_{4},\ldots,\lambda_{n} \right) + (c + \xi - a) \mathbf{v} \mathbf{v}^{\top} + (d + \eta - b) \mathbf{v} \mathbf{w}^{\top} + (d + \eta - b) \mathbf{w} \mathbf{v}^{\top}
    \end{equation}
    are not of the form $a + 2b \cos \left(\frac{2k\pi}{n+1} \right) + 2c \cos \left(\frac{4k \pi}{n+1} \right) + 2d \cos \left(\frac{6k \pi}{n+1} \right)$, $k = 1,\ldots,\frac{n}{2}$ then the eigenvalues of \eqref{eq:3.5a} are precisely the zeros of the rational function
    \begin{equation}\label{eq:3.5b}
    \begin{split}
    g(t) = 1 & + \frac{4}{n+1} \sum_{k=1}^{\frac{n}{2}} \frac{(c + \xi - a) \sin^{2} \left(\frac{2k\pi}{n+1} \right) + 2(d + \eta - b)\sin \left(\frac{2k\pi}{n+1} \right) \sin \left(\frac{4k\pi}{n+1} \right)}{\lambda_{2k} - t} + \\
    & - \frac{16(d + \eta - b)^{2}}{(n+1)^{2}} \sum_{1 \leqslant k < \ell \leqslant \frac{n}{2}} \frac{\left[\sin \left(\frac{2k\pi}{n+1} \right) \sin \left(\frac{4 \ell\pi}{n+1} \right) - \sin \left(\frac{4k\pi}{n+1} \right) \sin \left(\frac{2 \ell\pi}{n+1} \right) \right]^{2}}{(\lambda_{2k} - t)(\lambda_{2\ell} - t)}.
    \end{split}
    \end{equation}
    Furthermore, if $\nu_{1} \leqslant \nu_{2} \leqslant \ldots \leqslant \nu_{\frac{n}{2}}$ are the eigenvalues of \eqref{eq:3.5a} and $\lambda_{\sigma(2)} \leqslant \lambda_{\sigma(4)} \leqslant \ldots \leqslant \lambda_{\sigma(n)}$ are arranged in non-decreasing order by some bijection $\sigma$ defined in $\{2,4, \ldots, n\}$ then
    \begin{equation}\label{eq:3.5c}
    \lambda_{\sigma(2k)} + \tfrac{(c + \xi - a) - \sqrt{(c + \xi - a)^{2} + 4(d + \eta - b)^{2}}}{2} \leqslant \nu_{k} \leqslant \lambda_{\sigma(2k)} + \tfrac{(c + \xi - a) + \sqrt{(c + \xi - a)^{2} + 4(d + \eta - b)^{2}}}{2}
    \end{equation}
    for every $k=1,\ldots,\tfrac{n}{2}$.
\end{subequations}
\end{itemize}

\medskip

\noindent \textnormal{(b)} If $n$ is odd,

\begin{itemize}
\begin{subequations}
\item[\textnormal{i.}] $\mathbf{x}, \mathbf{y}$ are given by \eqref{eq:2.5a} and the eigenvalues of
    \begin{equation}\label{eq:3.6a}
    \mathrm{diag} \left(\lambda_{1},\lambda_{3},\ldots,\lambda_{n} \right) + (c + \xi - a) \mathbf{x} \mathbf{x}^{\top} + (d + \eta - b) \mathbf{x} \mathbf{y}^{\top} + (d + \eta - b) \mathbf{y} \mathbf{x}^{\top}
    \end{equation}
    are not of the form $a + 2b \cos \left[\frac{(2k-1)\pi}{n+1} \right] + 2c \cos \left[\frac{2(2k-1) \pi}{n+1} \right] + 2d \cos \left[\frac{3(2k-1) \pi}{n+1} \right]$, $k = 1,\ldots,\frac{n+1}{2}$ then the eigenvalues of \eqref{eq:3.6a} are precisely the zeros of the rational function
    \begin{equation}\label{eq:3.6b}
    \begin{split}
    f(t) &= 1 + \frac{4}{n+1} \sum_{k=1}^{\frac{n+1}{2}} \frac{(c + \xi - a) \sin^{2} \left[\frac{(2k - 1)\pi}{n+1} \right] + 2(d + \eta - b)\sin \left[\frac{(2k - 1)\pi}{n+1} \right] \sin \left[\frac{(4k - 2)\pi}{n+1} \right]}{\lambda_{2k - 1} - t} + \\
    & - \frac{16(d + \eta - b)^{2}}{(n+1)^{2}} \sum_{1 \leqslant k < \ell \leqslant \frac{n+1}{2}} \frac{\left\{\sin \left[\frac{(2k - 1)\pi}{n+1} \right] \sin \left[\frac{(4 \ell - 2)\pi}{n+1} \right] - \sin \left[\frac{(4k - 2)\pi}{n+1} \right] \sin \left[\frac{(2 \ell - 1)\pi}{n+1} \right] \right\}^{2}}{(\lambda_{2k - 1} - t)(\lambda_{2\ell - 1} - t)}.
    \end{split}
    \end{equation}
    Moreover, if $\mu_{1} \leqslant \mu_{2} \leqslant \ldots \leqslant \mu_{\frac{n+1}{2}}$ are the eigenvalues of \eqref{eq:3.6a} and $\lambda_{\tau(1)} \leqslant \lambda_{\tau(3)} \leqslant \ldots \leqslant \lambda_{\tau(n)}$ are arranged in non-decreasing order by some bijection $\tau$ defined in $\{1,3, \ldots, n\}$ then
    \begin{equation}\label{eq:3.6c}
    \lambda_{\tau(2k-1)} + \tfrac{(c + \xi - a) - \sqrt{(c + \xi - a)^{2} + 4(d + \eta - b)^{2}}}{2} \leqslant \mu_{k} \leqslant \lambda_{\tau(2k-1)} + \tfrac{(c + \xi - a) + \sqrt{(c + \xi - a)^{2} + 4(d + \eta - b)^{2}}}{2}
    \end{equation}
    for any $k=1,\ldots,\tfrac{n+1}{2}$.
\end{subequations}

\medskip

\begin{subequations}
\item[\textnormal{ii.}] $\mathbf{v}, \mathbf{w}$ are given by \eqref{eq:2.5b} and the eigenvalues of
    \begin{equation}\label{eq:3.7a}
    \mathrm{diag} \left(\lambda_{2},\lambda_{4},\ldots,\lambda_{n-1} \right) + (c + \xi - a) \mathbf{v} \mathbf{v}^{\top} + (d + \eta - b) \mathbf{v} \mathbf{w}^{\top} + (d + \eta - b) \mathbf{w} \mathbf{v}^{\top}
    \end{equation}
    are not of the form $a + 2b \cos \left(\frac{2k\pi}{n+1} \right) + 2c \cos \left(\frac{4k \pi}{n+1} \right) + 2d \cos \left(\frac{6k \pi}{n+1} \right)$, $k = 1,\ldots,\frac{n-1}{2}$ then the eigenvalues of \eqref{eq:3.7a} are precisely the zeros of the rational function
    \begin{equation}\label{eq:3.7b}
    \begin{split}
    g(t) = 1 & + \frac{4}{n+1} \sum_{k=1}^{\frac{n-1}{2}} \frac{(c + \xi - a) \sin^{2} \left(\frac{2k\pi}{n+1} \right) + 2(d + \eta - b)\sin \left(\frac{2k\pi}{n+1} \right) \sin \left(\frac{4k\pi}{n+1} \right)}{\lambda_{2k} - t} + \\
    & - \frac{16(d + \eta - b)^{2}}{(n+1)^{2}} \sum_{1 \leqslant k < \ell \leqslant \frac{n-1}{2}} \frac{\left[\sin \left(\frac{2k\pi}{n+1} \right) \sin \left(\frac{4 \ell\pi}{n+1} \right) - \sin \left(\frac{4k\pi}{n+1} \right) \sin \left(\frac{2 \ell\pi}{n+1} \right) \right]^{2}}{(\lambda_{2k} - t)(\lambda_{2\ell} - t)}.
    \end{split}
    \end{equation}
    Furthermore, if $\nu_{1} \leqslant \nu_{2} \leqslant \ldots \leqslant \nu_{\frac{n-1}{2}}$ are the eigenvalues of \eqref{eq:3.7a} and $\lambda_{\sigma(2)} \leqslant \lambda_{\sigma(4)} \leqslant \ldots \leqslant \lambda_{\sigma(n-1)}$ are arranged in non-decreasing order by some bijection $\sigma$ defined in $\{2,4, \ldots, n-1\}$ then
    \begin{equation}\label{eq:3.7c}
    \lambda_{\sigma(2k)} + \tfrac{(c + \xi - a) - \sqrt{(c + \xi - a)^{2} + 4(d + \eta - b)^{2}}}{2} \leqslant \nu_{k} \leqslant \lambda_{\sigma(2k)} + \tfrac{(c + \xi - a) + \sqrt{(c + \xi - a)^{2} + 4(d + \eta - b)^{2}}}{2}
    \end{equation}
    for all $k=1,\ldots,\tfrac{n-1}{2}$.
\end{subequations}
\end{itemize}
\end{lem}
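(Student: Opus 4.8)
The plan is to regard each of the four matrices in the statement as a fixed diagonal matrix modified by a symmetric term of rank at most two, and then to read off separately the characteristic (``secular'') equation and a Weyl-type localization. Since the four cases are argued identically, I treat (a).i. Put $\theta:=c+\xi-a$, $\vartheta:=d+\eta-b$, $\mathbf{D}:=\mathrm{diag}\left(\lambda_{1},\lambda_{3},\ldots,\lambda_{n-1}\right)$, $\mathbf{U}:=\left[\,\mathbf{x}\;\;\mathbf{y}\,\right]$ and
\[
\mathbf{C}:=\begin{bmatrix} \theta & \vartheta \\ \vartheta & 0 \end{bmatrix},
\]
so that the matrix \eqref{eq:3.4a}, which I denote $\mathbf{M}$, equals $\mathbf{D}+\mathbf{U}\mathbf{C}\mathbf{U}^{\top}=\mathbf{D}+\theta\,\mathbf{x}\mathbf{x}^{\top}+\vartheta\,\mathbf{x}\mathbf{y}^{\top}+\vartheta\,\mathbf{y}\mathbf{x}^{\top}$, with $\mathbf{x},\mathbf{y}$ from \eqref{eq:2.4a}.

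\emph{Secular equation.} Fix a real $t\notin\left\{\lambda_{1},\lambda_{3},\ldots,\lambda_{n-1}\right\}$, so $\mathbf{D}-t\mathbf{I}$ is invertible, and abbreviate $P:=\mathbf{x}^{\top}(\mathbf{D}-t\mathbf{I})^{-1}\mathbf{x}$, $Q:=\mathbf{x}^{\top}(\mathbf{D}-t\mathbf{I})^{-1}\mathbf{y}$, $R:=\mathbf{y}^{\top}(\mathbf{D}-t\mathbf{I})^{-1}\mathbf{y}$. Applying to $\mathbf{M}-t\mathbf{I}=\mathbf{D}-t\mathbf{I}+\theta\,\mathbf{x}\mathbf{x}^{\top}+\vartheta\,\mathbf{x}\mathbf{y}^{\top}+\vartheta\,\mathbf{y}\mathbf{x}^{\top}$ the determinant identity for rank-two modifications already used in the proof of Theorem~\ref{thr1} (cf.\ \cite{Miller81}) gives
\[
\det(\mathbf{M}-t\mathbf{I})=\left[\,1+\theta P+2\vartheta Q+\vartheta^{2}\!\left(Q^{2}-PR\right)\right]\det(\mathbf{D}-t\mathbf{I}).
\]
Inserting the explicit entries of $\mathbf{x}$ and $\mathbf{y}$, the part $1+\theta P+2\vartheta Q$ becomes $1$ plus the single sum of \eqref{eq:3.4b}; expanding $PR-Q^{2}$ (whose two quadratic forms carry the common factor $16/(n+1)^{2}$) as a double sum over pairs $(k,\ell)$, observing that the diagonal terms $k=\ell$ cancel, and grouping $(k,\ell)$ with $(\ell,k)$ turns the numerators into the squares $\left(x_{2k-1}x_{4\ell-2}-x_{2\ell-1}x_{4k-2}\right)^{2}$ (Lagrange's identity). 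Hence the bracket equals $f(t)$, i.e.\ $\det(\mathbf{M}-t\mathbf{I})=f(t)\prod_{k=1}^{n/2}(\lambda_{2k-1}-t)$ as rational functions of $t$; under the hypothesis that no eigenvalue of $\mathbf{M}$ is of the excluded form, $\prod_{k}(\lambda_{2k-1}-t)$ is nonzero at every eigenvalue, so the eigenvalues of $\mathbf{M}$ are exactly the zeros of $f$.

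\emph{Localization.} For \eqref{eq:3.4c} I invoke Weyl's monotonicity theorem. The pair $\{\mathbf{x},\mathbf{y}\}$ is orthonormal: indeed
\[
\sum_{k=1}^{n/2}\sin^{2}\!\tfrac{(2k-1)\pi}{n+1}=\sum_{k=1}^{n/2}\sin^{2}\!\tfrac{(4k-2)\pi}{n+1}=\tfrac{n+1}{4},\qquad \sum_{k=1}^{n/2}\sin\!\tfrac{(2k-1)\pi}{n+1}\sin\!\tfrac{(4k-2)\pi}{n+1}=0,
\]
which follow from $\sum_{k=1}^{m}\cos\!\left((2k-1)\phi\right)=\tfrac{\sin(2m\phi)}{2\sin\phi}$ and $\sin\!\tfrac{jn\pi}{n+1}=(-1)^{j+1}\sin\!\tfrac{j\pi}{n+1}$ (alternatively, from the orthonormality of the columns of $\mathbf{S}_{n}$, since the rearrangement \eqref{eq:2.4c} carries $\mathbf{s}_{1},\mathbf{s}_{n}$ to $\tfrac{1}{\sqrt{2}}(\mathbf{x}^{\top},\mathbf{v}^{\top})^{\top}$, $\tfrac{1}{\sqrt{2}}(\mathbf{x}^{\top},-\mathbf{v}^{\top})^{\top}$ and $\mathbf{s}_{2},\mathbf{s}_{n-1}$ to $\tfrac{1}{\sqrt{2}}(\mathbf{y}^{\top},\mathbf{w}^{\top})^{\top}$, $\tfrac{1}{\sqrt{2}}(\mathbf{y}^{\top},-\mathbf{w}^{\top})^{\top}$). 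Completing $\{\mathbf{x},\mathbf{y}\}$ to an orthonormal basis of $\mathbb{R}^{n/2}$, in that basis $\mathbf{U}\mathbf{C}\mathbf{U}^{\top}$ takes the block form $\mathbf{C}\oplus\mathbf{0}$, so its eigenvalues are $0$, with multiplicity $\tfrac{n}{2}-2$, together with the two eigenvalues $\tfrac{\theta\pm\sqrt{\theta^{2}+4\vartheta^{2}}}{2}$ of $\mathbf{C}$; since $\det\mathbf{C}=-\vartheta^{2}\leqslant 0$ these straddle $0$, so the least and greatest eigenvalues of $\mathbf{U}\mathbf{C}\mathbf{U}^{\top}$ are $\tfrac{\theta-\sqrt{\theta^{2}+4\vartheta^{2}}}{2}$ and $\tfrac{\theta+\sqrt{\theta^{2}+4\vartheta^{2}}}{2}$. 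As the $k$-th smallest eigenvalue of $\mathbf{D}$ is $\lambda_{\tau(2k-1)}$, Weyl's inequalities applied to $\mathbf{M}=\mathbf{D}+\mathbf{U}\mathbf{C}\mathbf{U}^{\top}$ give $\lambda_{\tau(2k-1)}+\tfrac{\theta-\sqrt{\theta^{2}+4\vartheta^{2}}}{2}\leqslant\mu_{k}\leqslant\lambda_{\tau(2k-1)}+\tfrac{\theta+\sqrt{\theta^{2}+4\vartheta^{2}}}{2}$ for its $k$-th smallest eigenvalue $\mu_{k}$, which is \eqref{eq:3.4c}. Parts (a).ii, (b).i and (b).ii are identical, with $(\mathbf{x},\mathbf{y},\mathbf{D},\tau)$ replaced by the data attached to \eqref{eq:3.5a}, \eqref{eq:3.6a} and \eqref{eq:3.7a}.

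\emph{Main obstacle.} The single computational step is matching the bracket $1+\theta P+2\vartheta Q+\vartheta^{2}(Q^{2}-PR)$ with $f$ of \eqref{eq:3.4b}, which hinges on the Lagrange-type rearrangement of $PR-Q^{2}$ and on the trigonometric identities furnishing $\norm{\mathbf{x}}=\norm{\mathbf{y}}=1$ and $\mathbf{x}^{\top}\mathbf{y}=0$; the rank-two determinant identity and Weyl's theorem are standard. A minor point, handled in a line, is that $\det(\mathbf{M}-t\mathbf{I})=f(t)\prod_{k}(\lambda_{2k-1}-t)$ is an identity of rational (after clearing denominators, polynomial) functions of $t$, so it suffices to check it away from the poles $\lambda_{2k-1}$, and the excluded-form hypothesis guarantees that the passage from the roots of the polynomial $\det(\mathbf{M}-t\mathbf{I})$ to the zeros of $f$ loses nothing.
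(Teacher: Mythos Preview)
Your proof is correct. For the localization part it is essentially identical to the paper's: both apply Weyl's inequalities to $\mathbf{D}+\mathbf{U}\mathbf{C}\mathbf{U}^{\top}$, both rely on the same trigonometric identities giving $\norm{\mathbf{x}}=\norm{\mathbf{y}}=1$, $\mathbf{x}^{\top}\mathbf{y}=0$, and both conclude that the extreme eigenvalues of the perturbation are $\tfrac{\theta\pm\sqrt{\theta^{2}+4\vartheta^{2}}}{2}$. Your computation of those extremes via the orthonormal completion $\mathbf{U}\mathbf{C}\mathbf{U}^{\top}\sim\mathbf{C}\oplus\mathbf{0}$ is a little cleaner than the paper's explicit characteristic polynomial calculation, but it is the same idea.

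For the secular equation the routes diverge slightly. The paper appeals to Anderson's general secular-equation theorem (Theorem~1 of \cite{Anderson96}), writing the rank-$\leqslant 2$ perturbation as $\mathbf{X}^{\top}\mathbf{Y}$ for suitable $3\times\tfrac{n}{2}$ (or $2\times\tfrac{n}{2}$) matrices; because those matrices involve $\sqrt{\theta}$, the paper must split into the cases $\theta\neq 0$ and $\theta=0$. You instead apply the Weinstein--Aronszajn/Miller determinant identity directly to $\mathbf{D}-t\mathbf{I}+\mathbf{U}\mathbf{C}\mathbf{U}^{\top}$, obtaining $\det(\mathbf{M}-t\mathbf{I})=\bigl[1+\theta P+2\vartheta Q+\vartheta^{2}(Q^{2}-PR)\bigr]\det(\mathbf{D}-t\mathbf{I})$ in one stroke and then identifying the bracket with $f(t)$ via Lagrange's identity. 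This is the same determinant formula the paper itself uses in the proof of Theorem~\ref{thr1}, so you are not importing foreign machinery; the payoff is that your argument handles all values of $\theta$ uniformly and is shorter.
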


\begin{proof}
Suppose $n \geqslant 5$ an even integer, $a,b,c,d, \xi, \eta \in \mathbb{R}$, $\lambda_{k}$, $k = 1,\ldots,n$ given by \eqref{eq:2.3} and put $\theta := c + \xi - a$, $\vartheta := d + \eta - b$. We shall denote by $\EuScript{S}(k,m)$ the collection of all $k$-element subsets of $\{1,2,\ldots,m \}$ written in increasing order; additionally, for any rectangular matrix $\mathbf{M}$, we shall indicate by $\det \left(\mathbf{M}[I,J] \right)$ the minor determined by the subsets $I = \left\{i_{1} < i_{2} < \ldots < i_{k} \right\}$ and $J = \left\{j_{1} < j_{2} < \ldots < j_{k} \right\}$. Supposing $\theta \neq 0$,
\begin{equation*}
\mathbf{X} = \left[
\begin{array}{cccc}
  2\sqrt{\frac{\theta}{n+1}} \sin \left(\frac{\pi}{n+1} \right) & 2\sqrt{\frac{\theta}{n+1}} \sin \left(\frac{3\pi}{n+1} \right) & \ldots & 2\sqrt{\frac{\theta}{n+1}} \sin \left(\frac{n\pi}{n+1} \right) \\
  2\sqrt{\frac{\theta}{n+1}} \sin \left(\frac{\pi}{n+1} \right) & 2\sqrt{\frac{\theta}{n+1}} \sin \left(\frac{3\pi}{n+1} \right) & \ldots & 2\sqrt{\frac{\theta}{n+1}} \sin \left(\frac{n\pi}{n+1} \right) \\
  \frac{2\vartheta}{\sqrt{\theta (n+1)}} \sin \left(\frac{2\pi}{n+1} \right) & \frac{2\vartheta}{\sqrt{\theta (n+1)}} \sin \left(\frac{6\pi}{n+1} \right) & \ldots & \frac{2\vartheta}{\sqrt{\theta (n+1)}} \sin \left[\frac{(4n-2)\pi}{n+1} \right]
\end{array}
\right]
\end{equation*}
and
\begin{equation*}
\mathbf{Y} = \left[
\begin{array}{cccc}
  2\sqrt{\frac{\theta}{n+1}} \sin \left(\frac{\pi}{n+1} \right) & 2\sqrt{\frac{\theta}{n+1}} \sin \left(\frac{3\pi}{n+1} \right) & \ldots & 2\sqrt{\frac{\theta}{n+1}} \sin \left(\frac{n\pi}{n+1} \right) \\
  \frac{2\vartheta}{\sqrt{\theta (n+1)}} \sin \left(\frac{2\pi}{n+1} \right) & \frac{2\vartheta}{\sqrt{\theta (n+1)}} \sin \left(\frac{6\pi}{n+1} \right) & \ldots & \frac{2\vartheta}{\sqrt{\theta (n+1)}} \sin \left[\frac{(4n-2)\pi}{n+1} \right] \\
  2\sqrt{\frac{\theta}{n+1}} \sin \left(\frac{\pi}{n+1} \right) & 2\sqrt{\frac{\theta}{n+1}} \sin \left(\frac{3\pi}{n+1} \right) & \ldots & 2\sqrt{\frac{\theta}{n+1}} \sin \left(\frac{n\pi}{n+1} \right)
\end{array}
\right],
\end{equation*}
Theorem 1 of \cite{Anderson96} ensures that $\zeta$ is an eigenvalue of \eqref{eq:3.4a} if and only if
\begin{equation*}
1 + \sum_{k=1}^{\frac{n}{2}} \sum_{J \in \EuScript{S}\left(k,\frac{n}{2} \right)} \sum_{I \in \EuScript{S}(k,3)} \frac{\det\left(\mathbf{X}[I,J] \right) \det\left(\mathbf{Y}[I,J] \right)}{\prod_{j \in J}(\lambda_{2j-1} - \zeta)} = 0
\end{equation*}
provided that $\zeta$ is not an eigenvalue of $\mathrm{diag} \left(\lambda_{1},\lambda_{3},\ldots,\lambda_{n-1} \right)$. Since
\begin{equation*}
\begin{split}
1 + \sum_{k=1}^{\frac{n}{2}} & \sum_{J \in \EuScript{S}\left(k,\frac{n}{2} \right)} \sum_{I \in \EuScript{S}(k,3)} \frac{\det\left(\mathbf{X}[I,J] \right) \det\left(\mathbf{Y}[I,J] \right)}{\prod_{j \in J}(\lambda_{2j-1} - \zeta)} = \\
&= 1 + \frac{4}{n+1} \sum_{k=1}^{\frac{n}{2}} \frac{\theta \sin^{2} \left[\frac{(2k - 1)\pi}{n+1} \right] + 2\vartheta\sin \left[\frac{(2k - 1)\pi}{n+1} \right] \sin \left[\frac{(4k - 2)\pi}{n+1} \right]}{\lambda_{2k - 1} - \zeta} + \\
& - \frac{16\vartheta^{2}}{(n+1)^{2}} \sum_{1 \leqslant k < \ell \leqslant \frac{n}{2}} \frac{\left\{\sin \left[\frac{(2k - 1)\pi}{n+1} \right] \sin \left[\frac{(4 \ell - 2)\pi}{n+1} \right] - \sin \left[\frac{(4k - 2)\pi}{n+1} \right] \sin \left[\frac{(2 \ell - 1)\pi}{n+1} \right] \right\}^{2}}{(\lambda_{2k - 1} - \zeta)(\lambda_{2\ell - 1} - \zeta)}
\end{split}
\end{equation*}
we obtain \eqref{eq:3.4b}. Considering now $\theta = 0$ and setting
\begin{gather*}
\mathbf{X} = \left[
\begin{array}{cccc}
  2\sqrt{\frac{\vartheta}{n + 1}} \sin \left(\frac{\pi}{n+1} \right) & 2\sqrt{\frac{\vartheta}{n + 1}} \sin \left(\frac{3\pi}{n+1} \right) & \ldots & 2\sqrt{\frac{\vartheta}{n + 1}} \sin \left(\frac{n\pi}{n+1} \right) \\
  2\sqrt{\frac{\vartheta}{n + 1}} \sin \left(\frac{2\pi}{n+1} \right) & 2\sqrt{\frac{\vartheta}{n + 1}} \sin \left(\frac{6\pi}{n+1} \right) & \ldots & 2\sqrt{\frac{\vartheta}{n + 1}} \sin \left[\frac{(4n-2)\pi}{n+1} \right]
\end{array}
\right], \\
\mathbf{Y} = \left[
\begin{array}{cccc}
  2\sqrt{\frac{\vartheta}{n + 1}} \sin \left(\frac{2\pi}{n+1} \right) & 2\sqrt{\frac{\vartheta}{n + 1}} \sin \left(\frac{6\pi}{n+1} \right) & \ldots & 2\sqrt{\frac{\vartheta}{n + 1}} \sin \left[\frac{(4n-2)\pi}{n+1} \right] \\
  2\sqrt{\frac{\vartheta}{n + 1}} \sin \left(\frac{\pi}{n+1} \right) & 2\sqrt{\frac{\vartheta}{n + 1}} \sin \left(\frac{3\pi}{n+1} \right) & \ldots & 2\sqrt{\frac{\vartheta}{n + 1}} \sin \left(\frac{n\pi}{n+1} \right)
\end{array}
\right]
\end{gather*}
we still have that $\zeta$ is an eigenvalue of \eqref{eq:3.4a} if and only if
\begin{equation*}
1 + \sum_{k=1}^{\frac{n}{2}} \sum_{J \in \EuScript{S}\left(k,\frac{n}{2} \right)} \sum_{I \in \EuScript{S}(k,2)} \frac{\det\left(\mathbf{X}[I,J] \right) \det\left(\mathbf{Y}[I,J] \right)}{\prod_{j \in J}(\lambda_{2j-1} - \zeta)} = 0
\end{equation*}
assuming that $\zeta$ is not an eigenvalue of $\mathrm{diag} \left(\lambda_{1},\lambda_{3},\ldots,\lambda_{n-1} \right)$. Hence,
\begin{equation*}
\begin{split}
1 + \sum_{k=1}^{\frac{n}{2}} \sum_{J \in \EuScript{S}\left(k,\frac{n}{2} \right)}  & \sum_{I \in \EuScript{S}(k,2)} \frac{\det\left(\mathbf{X}[I,J] \right) \det\left(\mathbf{Y}[I,J] \right)}{\prod_{j \in J}(\lambda_{2j-1} - \zeta)} = 1 + \frac{8\vartheta}{n+1} \sum_{k=1}^{\frac{n}{2}} \frac{\sin \left[\frac{(2k - 1)\pi}{n+1} \right] \sin \left[\frac{(4k - 2)\pi}{n+1} \right]}{\lambda_{2k - 1} - \zeta} + \\
& - \frac{16\vartheta^{2}}{(n+1)^{2}} \sum_{1 \leqslant k < \ell \leqslant \frac{n}{2}} \frac{\left\{\sin \left[\frac{(2k - 1)\pi}{n+1} \right] \sin \left[\frac{(4 \ell - 2)\pi}{n+1} \right] - \sin \left[\frac{(4k - 2)\pi}{n+1} \right] \sin \left[\frac{(2 \ell - 1)\pi}{n+1} \right] \right\}^{2}}{(\lambda_{2k - 1} - \zeta)(\lambda_{2\ell - 1} - \zeta)}
\end{split}
\end{equation*}
and \eqref{eq:3.4b} is established. Let $\mu_{1} \leqslant \mu_{2} \leqslant \ldots \leqslant \mu_{\frac{n}{2}}$ be the eigenvalues of \eqref{eq:3.4a} and $\lambda_{\tau(1)} \leqslant \lambda_{\tau(3)} \leqslant \ldots \leqslant \lambda_{\tau(n-1)}$ be arranged in non-decreasing order by some bijection $\tau$ defined in $\{1,3, \ldots, n-1\}$. Thus,
\begin{equation}\label{eq:3.8}
\lambda_{\tau(2k-1)} + \lambda_{\min} \left(\theta \mathbf{x} \mathbf{x}^{\top} + \vartheta \mathbf{x} \mathbf{y}^{\top} + \vartheta \mathbf{y} \mathbf{x}^{\top} \right) \leqslant \mu_{k} \leqslant \lambda_{\tau(2k-1)} + \lambda_{\max} \left(\theta \mathbf{x} \mathbf{x}^{\top} + \vartheta \mathbf{x} \mathbf{y}^{\top} + \vartheta \mathbf{y} \mathbf{x}^{\top} \right)
\end{equation}
for each $k=1,\ldots,\tfrac{n}{2}$ (see \cite{Horn13}, page $242$). Since the characteristic polynomial of $\theta \mathbf{x} \mathbf{x}^{\top} + \vartheta \mathbf{x} \mathbf{y}^{\top} + \vartheta \mathbf{y} \mathbf{x}^{\top}$ is
\begin{equation*}
\begin{split}
\det \left[t \mathbf{I}_{\frac{n}{2}} - \theta \mathbf{x} \mathbf{x}^{\top} - \vartheta \mathbf{x} \mathbf{y}^{\top} - \vartheta \mathbf{y} \mathbf{x}^{\top} \right] &= t^{\frac{n}{2} - 2} \Big[t^{2} - \left(\theta \mathbf{x}^{\top} \mathbf{x} + \vartheta \mathbf{y}^{\top} \mathbf{x} + \vartheta \mathbf{x}^{\top} \mathbf{y} \right)t \Big. \\
& \hspace*{3.8cm} \Big. + \vartheta^{2} \left(\mathbf{x}^{\top} \mathbf{y} \right) \left(\mathbf{y}^{\top} \mathbf{x} \right) - \vartheta^{2} \left(\mathbf{x}^{\top} \mathbf{x} \right) \left(\mathbf{y}^{\top} \mathbf{y} \right) \Big] \\
&= t^{\frac{n}{2} - 2} \Big\{t^{2} - \left(\theta \norm{\mathbf{x}}^{2} + 2\vartheta \, \mathbf{x}^{\top} \mathbf{y} \right)t + \vartheta^{2} \left[\left(\mathbf{x}^{\top} \mathbf{y} \right)^{2} - \norm{\mathbf{x}}^{2} \norm{\mathbf{y}}^{2} \right] \Big\}
\end{split}
\end{equation*}
we have that its spectrum is
\begin{equation}\label{eq:3.9}
\mathrm{Spec} \left(\theta \mathbf{x} \mathbf{x}^{\top} + \vartheta \mathbf{x} \mathbf{y}^{\top} + \vartheta \mathbf{y} \mathbf{x}^{\top} \right) = \left\{0, \alpha^{-}, \alpha^{+} \right\}
\end{equation}
where $\alpha^{\pm} := \frac{\theta \norm{\mathbf{x}}^{2} + 2 \vartheta \mathbf{x}^{\top} \mathbf{y} \pm \sqrt{\left(\theta \norm{\mathbf{x}}^{2} + 2 \vartheta \mathbf{x}^{\top} \mathbf{y}\right)^{2} - 4\vartheta^{2} \left[\left(\mathbf{x}^{\top} \mathbf{y} \right)^{2} - \norm{\mathbf{x}}^{2} \norm{\mathbf{y}}^{2} \right]}}{2}$. From the identities,
\begin{gather*}
\sum_{k=1}^{\frac{n}{2}} \sin^{2} \left[\frac{(2k - 1) \pi}{n + 1} \right] = \frac{n + 1}{4} = \sum_{k=1}^{\frac{n}{2}} \sin^{2} \left[\frac{(4k - 2) \pi}{n + 1} \right], \\
\sum_{k=1}^{\frac{n}{2}} \sin \left[\frac{(2k - 1) \pi}{n + 1} \right] \sin \left[\frac{(4k - 2) \pi}{n + 1} \right] = 0
\end{gather*}
it follows $\lVert \mathbf{x} \rVert = \lVert \mathbf{y} \rVert = 1$ and $\mathbf{x}^{\top} \mathbf{y} = 0$. Hence, \eqref{eq:3.8} and \eqref{eq:3.9} yields \eqref{eq:3.4c}. The proofs of the remaining assertions are performed in the same way and so will be omitted.
\end{proof}

The next statement allows us to locate the eigenvalues of $\mathbf{H}_{n}$ providing also explicit bounds for each of them.


\begin{thr}
Let $n \geqslant 5$ be an integer, $a,b,c,d, \xi, \eta \in \mathbb{R}$, $\lambda_{k}$, $k = 1,\ldots,n$ be given by \eqref{eq:2.3} and $\mathbf{H}_{n}$ the $n \times n$ matrix \eqref{eq:1.1}.

\medskip

\noindent \textnormal{(a)} If $n$ is even, the eigenvalues of $\boldsymbol{\Phi}_{\frac{n}{2}}$ in \eqref{eq:2.4d} are not of the form $\lambda_{2k-1}$, $k = 1,\ldots,\frac{n}{2}$ and the eigenvalues of $\boldsymbol{\Psi}_{\frac{n}{2}}$ in \eqref{eq:2.4e} are not of the form $\lambda_{2k}$, $k = 1,\ldots,\frac{n}{2}$ then the eigenvalues of $\mathbf{H}_{n}$ are precisely the zeros of the rational functions $f(t)$ and $g(t)$ given by \eqref{eq:3.4b} and \eqref{eq:3.5b}, respectively. Moreover, if $\mu_{1} \leqslant \mu_{2} \leqslant \ldots \leqslant \mu_{\frac{n}{2}}$ are the zeros of $f(t)$ and $\nu_{1} \leqslant \nu_{2} \leqslant \ldots \leqslant \nu_{\frac{n}{2}}$ are the zeros of $g(t)$ (counting multiplicities in both cases) then $\mu_{k}$, $k=1,\ldots,\tfrac{n}{2}$ and $\nu_{k}$, $k=1,\ldots,\tfrac{n}{2}$ satisfy \eqref{eq:3.4c} and \eqref{eq:3.5c}, respectively.

\medskip

\noindent \textnormal{(b)} If $n$ is odd, the eigenvalues of $\boldsymbol{\Phi}_{\frac{n+1}{2}}$ in \eqref{eq:2.5d} are not of the form $\lambda_{2k-1}$, $k = 1,\ldots,\frac{n+1}{2}$ and the eigenvalues of $\boldsymbol{\Psi}_{\frac{n-1}{2}}$ in \eqref{eq:2.5e} are not of the form $\lambda_{2k}$, $k = 1,\ldots,\frac{n-1}{2}$ then the eigenvalues of $\mathbf{H}_{n}$ are precisely the zeros of the rational functions $f(t)$ and $g(t)$ given by \eqref{eq:3.6b} and \eqref{eq:3.7b}, respectively. Furthermore, if $\mu_{1} \leqslant \mu_{2} \leqslant \ldots \leqslant \mu_{\frac{n+1}{2}}$ are the zeros of $f(t)$ and $\nu_{1} \leqslant \nu_{2} \leqslant \ldots \leqslant \nu_{\frac{n-1}{2}}$ are the zeros of $g(t)$ (counting multiplicities in both cases) then $\mu_{k}$, $k=1,\ldots,\tfrac{n+1}{2}$ and $\nu_{k}$, $k=1,\ldots,\tfrac{n-1}{2}$ satisfy \eqref{eq:3.6c} and \eqref{eq:3.7c}, respectively.
\end{thr}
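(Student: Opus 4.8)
The plan is simply to assemble the two lemmas already proved. First I would record that the matrix $\mathbf{S}_{n}\mathbf{P}_{n}$ appearing in Lemma~\ref{lem3} is orthogonal: $\mathbf{S}_{n}$ is symmetric and orthogonal by \eqref{eq:2.1}, while $\mathbf{P}_{n}$ in \eqref{eq:2.4c} (resp.\ \eqref{eq:2.5c}) is a permutation matrix, so $(\mathbf{S}_{n}\mathbf{P}_{n})^{-1} = \mathbf{P}_{n}^{\top}\mathbf{S}_{n}^{\top} = \mathbf{P}_{n}^{\top}\mathbf{S}_{n}$. Hence Lemma~\ref{lem3} exhibits $\mathbf{H}_{n}$ as orthogonally similar to the block-diagonal matrix with blocks $\boldsymbol{\Phi}$ and $\boldsymbol{\Psi}$ (of sizes $\tfrac{n}{2},\tfrac{n}{2}$ when $n$ is even and $\tfrac{n+1}{2},\tfrac{n-1}{2}$ when $n$ is odd), so that the spectrum of $\mathbf{H}_{n}$, counted with multiplicity, is the multiset union of the spectra of $\boldsymbol{\Phi}$ and $\boldsymbol{\Psi}$.

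Next I would observe that the blocks $\boldsymbol{\Phi}$ and $\boldsymbol{\Psi}$ of \eqref{eq:2.4d}--\eqref{eq:2.4e} (resp.\ \eqref{eq:2.5d}--\eqref{eq:2.5e}) coincide verbatim with the matrices \eqref{eq:3.4a} and \eqref{eq:3.5a} (resp.\ \eqref{eq:3.6a} and \eqref{eq:3.7a}) treated in Lemma~\ref{lem4}, with the very same vectors $\mathbf{x},\mathbf{y},\mathbf{v},\mathbf{w}$. Moreover the hypothesis that the eigenvalues of $\boldsymbol{\Phi}$ are not of the form $\lambda_{2k-1}$ is, through \eqref{eq:2.3}, exactly the non-degeneracy assumption of part~i of Lemma~\ref{lem4}, and likewise for $\boldsymbol{\Psi}$ and part~ii. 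Therefore Lemma~\ref{lem4}(a)i gives that the eigenvalues of $\boldsymbol{\Phi}_{n/2}$ are precisely the zeros of $f$ in \eqref{eq:3.4b} and, in non-decreasing order, satisfy \eqref{eq:3.4c}; Lemma~\ref{lem4}(a)ii does the same for $\boldsymbol{\Psi}_{n/2}$, $g$ in \eqref{eq:3.5b} and \eqref{eq:3.5c}; the odd case uses parts (b)i and (b)ii analogously. Combining with the previous paragraph yields the statement.

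The single point that needs a sentence of care is the agreement of multiplicities: the theorem speaks of the zeros of $f$ (resp.\ $g$) counted with multiplicity, whereas Lemma~\ref{lem4} lists eigenvalues of the corresponding matrix. This is settled by noting that $f(t)\prod_{k=1}^{n/2}(\lambda_{2k-1}-t)$ equals, up to sign, the characteristic polynomial $\det(\boldsymbol{\Phi}_{n/2}-t\mathbf{I}_{n/2})$ --- read off from the same rank-one and rank-two determinant expansions used in the proof of Theorem~\ref{thr1} --- and this polynomial has degree $\tfrac{n}{2}$; since no $\lambda_{2k-1}$ is a root of it under the hypothesis, its $\tfrac{n}{2}$ roots are exactly the $\tfrac{n}{2}$ zeros of $f$, with the same multiplicities. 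Consequently there is no real obstacle here: the theorem is a repackaging of Lemmas~\ref{lem3} and~\ref{lem4}, the only routine bookkeeping being the even/odd split (already isolated in both lemmas) and this multiplicity count.
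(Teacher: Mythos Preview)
Your proposal is correct and follows essentially the same route as the paper: factor the characteristic polynomial via the block diagonalization of Lemma~\ref{lem3} and then invoke Lemma~\ref{lem4} on each block. The paper's proof is terser (it simply writes $\det(t\mathbf{I}_{n}-\mathbf{H}_{n})=\det(t\mathbf{I}-\boldsymbol{\Phi})\det(t\mathbf{I}-\boldsymbol{\Psi})$ and appeals to Lemma~\ref{lem4}), whereas you additionally spell out the orthogonality of $\mathbf{S}_{n}\mathbf{P}_{n}$ and the multiplicity bookkeeping, but the substance is identical.
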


\begin{proof}
Suppose an integer $n \geqslant 5$, $a,b,c,d, \xi, \eta \in \mathbb{R}$ and $\lambda_{k}$, $k = 1,\ldots,n$ be given by \eqref{eq:2.3}.

\medskip

\noindent (a) According to Lemma~\ref{lem3} and the determinant formula for block-triangular matrices (see \cite{Harville97}, page $185$), the characteristic polynomial of $\mathbf{H}_{n}$, for $n$ even, is
\begin{equation*}
\det \left(t \mathbf{I}_{n} - \mathbf{H}_{n} \right) = \det \left(t \mathbf{I}_{\frac{n}{2}} - \boldsymbol{\Phi}_{\frac{n}{2}} \right) \det \left(t \mathbf{I}_{\frac{n}{2}} - \boldsymbol{\Psi}_{\frac{n}{2}} \right)
\end{equation*}
where $\boldsymbol{\Phi}_{\frac{n}{2}}$ and $\boldsymbol{\Psi}_{\frac{n}{2}}$ are given by \eqref{eq:2.4d} and \eqref{eq:2.4e}, respectively, so that the thesis is a direct consequence of Lemma~\ref{lem4}.

\medskip

\noindent (b) For $n$ odd, we obtain
\begin{equation*}
\det \left(t \mathbf{I}_{n} - \mathbf{H}_{n} \right) = \det \left(t \mathbf{I}_{\frac{n+1}{2}} - \boldsymbol{\Phi}_{\frac{n+1}{2}} \right) \det \left(t \mathbf{I}_{\frac{n-1}{2}} - \boldsymbol{\Psi}_{\frac{n-1}{2}} \right)
\end{equation*}
where $\boldsymbol{\Phi}_{\frac{n+1}{2}}$ and $\boldsymbol{\Psi}_{\frac{n-1}{2}}$ are given by \eqref{eq:2.5d} and \eqref{eq:2.5e}, respectively. The conclusion follows from Lemma~\ref{lem4}.
\end{proof}

\subsection{Eigenvectors of $\mathbf{H}_{n}$} \label{sec:3}

\indent

The decomposition presented in Lemma~\ref{lem3} allows us also to compute eigenvectors for $\mathbf{H}_{n}$ in \eqref{eq:1.1}.


\begin{thr}
Let $n \geqslant 5$ be an integer, $a,b,c,d, \xi, \eta \in \mathbb{R}$, $\lambda_{k}$, $k = 1,\ldots,n$ be given by \eqref{eq:2.3} and $\mathbf{H}_{n}$ the $n \times n$ matrix \eqref{eq:1.1}.

\medskip

\begin{subequations}
\noindent \textnormal{(a)} If $n$ is even, $\mathbf{S}_{n}$ is the $n \times n$ matrix \eqref{eq:2.1}, $\mathbf{P}_{n}$ is the $n \times n$ permutation matrix \eqref{eq:2.4c}, the zeros $\mu_{1}, \ldots, \mu_{\frac{n}{2}}$ of \eqref{eq:3.4b} are not of the form $\lambda_{2k-1}$, $k = 1,\ldots,\frac{n}{2}$, the zeros $\nu_{1}, \ldots, \nu_{\frac{n}{2}}$ of \eqref{eq:3.5b} are not of the form $\lambda_{2k}$, $k = 1,\ldots,\frac{n}{2}$,
\begin{gather*}
\underset{j=1}{\overset{\frac{n}{2}}{\sum}} \left\{\frac{(c + \xi - a) \sin^{2} \left[\frac{(2j - 1)\pi}{n + 1} \right] + (d + \eta - b) \sin \left[\frac{(2j - 1)\pi}{n + 1} \right] \sin \left[\frac{(4j - 2)\pi}{n + 1} \right]}{\mu_{k} - \lambda_{2j-1}} \right\} \neq \frac{n+1}{4}, \\
\underset{j=1}{\overset{\frac{n}{2}}{\sum}} \left[\frac{(c + \xi - a) \sin \left(\frac{2j\pi}{n + 1} \right) \sin \left(\frac{4j\pi}{n + 1} \right) + (d + \eta - b) \sin^{2} \left(\frac{4j\pi}{n + 1} \right)}{\nu_{k} - \lambda_{2j}} \right] \neq \frac{n+1}{4}
\end{gather*}
and $b \neq d + \eta$ then
\begin{equation}\label{eq:3.10a}
    \mathbf{S}_{n} \mathbf{P}_{n} \left[
    \begin{array}{c}
      \frac{2\sin\left(\frac{2\pi}{n+1} \right)}{\sqrt{n+1}(\mu_{k} - \lambda_{1})} + \frac{8 \underset{j=1}{\overset{\frac{n}{2}}{\sum}} \left\{\frac{(c + \xi - a) \sin \left[\frac{(2j - 1)\pi}{n + 1} \right] \sin \left[\frac{(4j - 2)\pi}{n + 1} \right] + (d + \eta - b) \sin^{2} \left[\frac{(4j - 2)\pi}{n + 1} \right]}{\mu_{k} - \lambda_{2j-1}} \right\}}{n + 1 - 4 \underset{j=1}{\overset{\frac{n}{2}}{\sum}} \left\{\frac{(c + \xi - a) \sin^{2} \left[\frac{(2j - 1)\pi}{n + 1} \right] + (d + \eta - b) \sin \left[\frac{(2j - 1)\pi}{n + 1} \right] \sin \left[\frac{(4j - 2)\pi}{n + 1} \right]}{\mu_{k} - \lambda_{2j-1}} \right\}} \frac{\sin \left(\frac{\pi}{n+1} \right)}{\sqrt{n+1}(\mu_{k} - \lambda_{1})} \\[25pt]
      \frac{2\sin\left(\frac{6\pi}{n+1} \right)}{\sqrt{n+1}(\mu_{k} - \lambda_{3})} + \frac{8 \underset{j=1}{\overset{\frac{n}{2}}{\sum}} \left\{\frac{(c + \xi - a) \sin \left[\frac{(2j - 1)\pi}{n + 1} \right] \sin \left[\frac{(4j - 2)\pi}{n + 1} \right] + (d + \eta - b) \sin^{2} \left[\frac{(4j - 2)\pi}{n + 1} \right]}{\mu_{k} - \lambda_{2j-1}} \right\}}{n + 1 - 4 \underset{j=1}{\overset{\frac{n}{2}}{\sum}} \left\{\frac{(c + \xi - a) \sin^{2} \left[\frac{(2j - 1)\pi}{n + 1} \right] + (d + \eta - b) \sin \left[\frac{(2j - 1)\pi}{n + 1} \right] \sin \left[\frac{(4j - 2)\pi}{n + 1} \right]}{\mu_{k} - \lambda_{2j-1}} \right\}} \frac{\sin \left(\frac{3\pi}{n+1} \right)}{\sqrt{n+1}(\mu_{k} - \lambda_{3})} \\
      \vdots \\[3pt]
      \frac{2\sin\left[\frac{(2n-2)\pi}{n+1} \right]}{\sqrt{n+1}(\mu_{k} - \lambda_{n-1})} + \frac{8 \underset{j=1}{\overset{\frac{n}{2}}{\sum}} \left\{\frac{(c + \xi - a) \sin \left[\frac{(2j - 1)\pi}{n + 1} \right] \sin \left[\frac{(4j - 2)\pi}{n + 1} \right] + (d + \eta - b) \sin^{2} \left[\frac{(4j - 2)\pi}{n + 1} \right]}{\mu_{k} - \lambda_{2j-1}} \right\}}{n + 1 - 4 \underset{j=1}{\overset{\frac{n}{2}}{\sum}} \left\{\frac{(c + \xi - a) \sin^{2} \left[\frac{(2j - 1)\pi}{n + 1} \right] + (d + \eta - b) \sin \left[\frac{(2j - 1)\pi}{n + 1} \right] \sin \left[\frac{(4j - 2)\pi}{n + 1} \right]}{\mu_{k} - \lambda_{2j-1}} \right\}} \frac{\sin \left[\frac{(n-1)\pi}{n+1} \right]}{\sqrt{n+1}(\mu_{k} - \lambda_{n-1})} \\[15pt]
      0 \\
      \vdots \\[3pt]
      0
    \end{array}
    \right]
\end{equation}
is an eigenvector of $\mathbf{H}_{n}$ associated to $\mu_{k}$, $k=1,\ldots, \frac{n}{2}$ and \begin{equation}\label{eq:3.10b}
    \mathbf{S}_{n} \mathbf{P}_{n} \left[
    \begin{array}{c}
      0 \\
      \vdots \\[3pt]
      0 \\[5pt]
      \frac{2\sin\left(\frac{4\pi}{n+1} \right)}{\sqrt{n+1}(\nu_{k} - \lambda_{2})} + \frac{8 \underset{j=1}{\overset{\frac{n}{2}}{\sum}} \left[\frac{(c + \xi - a) \sin \left(\frac{2j\pi}{n + 1} \right) \sin \left(\frac{4j\pi}{n + 1} \right) + (d + \eta - b) \sin^{2} \left(\frac{4j\pi}{n + 1} \right)}{\nu_{k} - \lambda_{2j}} \right]}{n + 1 - 4 \underset{j=1}{\overset{\frac{n}{2}}{\sum}} \left[\frac{(c + \xi - a) \sin \left(\frac{2j\pi}{n + 1} \right) \sin \left(\frac{4j\pi}{n + 1} \right) + (d + \eta - b) \sin^{2} \left(\frac{4j\pi}{n + 1} \right)}{\nu_{k} - \lambda_{2j}} \right]} \frac{\sin \left(\frac{2\pi}{n+1} \right)}{\sqrt{n+1}(\nu_{k} - \lambda_{2})} \\[20pt]
      \frac{2\sin\left(\frac{8\pi}{n+1} \right)}{\sqrt{n+1}(\nu_{k} - \lambda_{4})} + \frac{8 \underset{j=1}{\overset{\frac{n}{2}}{\sum}} \left[\frac{(c + \xi - a) \sin \left(\frac{2j\pi}{n + 1} \right) \sin \left(\frac{4j\pi}{n + 1} \right) + (d + \eta - b) \sin^{2} \left(\frac{4j\pi}{n + 1} \right)}{\nu_{k} - \lambda_{2j}} \right]}{n + 1 - 4 \underset{j=1}{\overset{\frac{n}{2}}{\sum}} \left[\frac{(c + \xi - a) \sin \left(\frac{2j\pi}{n + 1} \right) \sin \left(\frac{4j\pi}{n + 1} \right) + (d + \eta - b) \sin^{2} \left(\frac{4j\pi}{n + 1} \right)}{\nu_{k} - \lambda_{2j}} \right]} \frac{\sin \left(\frac{4\pi}{n+1} \right)}{\sqrt{n+1}(\nu_{k} - \lambda_{4})} \\
      \vdots \\[5pt]
      \frac{2\sin\left(\frac{2n\pi}{n+1} \right)}{\sqrt{n+1}(\nu_{k} - \lambda_{n})} + \frac{8 \underset{j=1}{\overset{\frac{n}{2}}{\sum}} \left[\frac{(c + \xi - a) \sin \left(\frac{2j\pi}{n + 1} \right) \sin \left(\frac{4j\pi}{n + 1} \right) + (d + \eta - b) \sin^{2} \left(\frac{4j\pi}{n + 1} \right)}{\nu_{k} - \lambda_{2j}} \right]}{n + 1 - 4 \underset{j=1}{\overset{\frac{n}{2}}{\sum}} \left[\frac{(c + \xi - a) \sin \left(\frac{2j\pi}{n + 1} \right) \sin \left(\frac{4j\pi}{n + 1} \right) + (d + \eta - b) \sin^{2} \left(\frac{4j\pi}{n + 1} \right)}{\nu_{k} - \lambda_{2j}} \right]} \frac{\sin \left(\frac{n\pi}{n+1} \right)}{\sqrt{n+1}(\nu_{k} - \lambda_{n})}
    \end{array}
    \right]
\end{equation}
is an eigenvector of $\mathbf{H}_{n}$ associated to $\nu_{k}$, $k=1,\ldots, \frac{n}{2}$.
\end{subequations}

\medskip

\begin{subequations}
\noindent \textnormal{(b)} If $n$ is odd, $\mathbf{S}_{n}$ is the $n \times n$ matrix \eqref{eq:2.1}, $\mathbf{P}_{n}$ is the $n \times n$ permutation matrix \eqref{eq:2.4c}, the zeros $\mu_{1}, \ldots, \mu_{\frac{n+1}{2}}$ of \eqref{eq:3.6b} are not of the form $\lambda_{2k-1}$, $k = 1,\ldots,\frac{n+1}{2}$, the zeros $\nu_{1}, \ldots, \nu_{\frac{n-1}{2}}$ of \eqref{eq:3.7b} are not of the form $\lambda_{2k}$, $k = 1,\ldots,\frac{n-1}{2}$,
\begin{gather*}
\underset{j=1}{\overset{\frac{n+1}{2}}{\sum}} \left\{\frac{(c + \xi - a) \sin^{2} \left[\frac{(2j - 1)\pi}{n + 1} \right] + (d + \eta - b) \sin \left[\frac{(2j - 1)\pi}{n + 1} \right] \sin \left[\frac{(4j - 2)\pi}{n + 1} \right]}{\mu_{k} - \lambda_{2j-1}} \right\} \neq \frac{n+1}{4}, \\
\underset{j=1}{\overset{\frac{n-1}{2}}{\sum}} \left[\frac{(c + \xi - a) \sin \left(\frac{2j\pi}{n + 1} \right) \sin \left(\frac{4j\pi}{n + 1} \right) + (d + \eta - b) \sin^{2} \left(\frac{4j\pi}{n + 1} \right)}{\nu_{k} - \lambda_{2j}} \right] \neq \frac{n+1}{4}
\end{gather*}
and $b \neq d + \eta$ then
\begin{equation}\label{eq:3.11a}
    \mathbf{S}_{n} \mathbf{P}_{n} \left[
    \begin{array}{c}
      \frac{2\sin\left(\frac{2\pi}{n+1} \right)}{\sqrt{n+1}(\mu_{k} - \lambda_{1})} + \frac{8 \underset{j=1}{\overset{\frac{n+1}{2}}{\sum}} \left\{\frac{(c + \xi - a) \sin \left[\frac{(2j - 1)\pi}{n + 1} \right] \sin \left[\frac{(4j - 2)\pi}{n + 1} \right] + (d + \eta - b) \sin^{2} \left[\frac{(4j - 2)\pi}{n + 1} \right]}{\mu_{k} - \lambda_{2j-1}} \right\}}{n + 1 - 4 \underset{j=1}{\overset{\frac{n+1}{2}}{\sum}} \left\{\frac{(c + \xi - a) \sin^{2} \left[\frac{(2j - 1)\pi}{n + 1} \right] + (d + \eta - b) \sin \left[\frac{(2j - 1)\pi}{n + 1} \right] \sin \left[\frac{(4j - 2)\pi}{n + 1} \right]}{\mu_{k} - \lambda_{2j-1}} \right\}} \frac{\sin \left(\frac{\pi}{n+1} \right)}{\sqrt{n+1}(\mu_{k} - \lambda_{1})} \\[25pt]
      \frac{2\sin\left(\frac{6\pi}{n+1} \right)}{\sqrt{n+1}(\mu_{k} - \lambda_{3})} + \frac{8 \underset{j=1}{\overset{\frac{n+1}{2}}{\sum}} \left\{\frac{(c + \xi - a) \sin \left[\frac{(2j - 1)\pi}{n + 1} \right] \sin \left[\frac{(4j - 2)\pi}{n + 1} \right] + (d + \eta - b) \sin^{2} \left[\frac{(4j - 2)\pi}{n + 1} \right]}{\mu_{k} - \lambda_{2j-1}} \right\}}{n + 1 - 4 \underset{j=1}{\overset{\frac{n+1}{2}}{\sum}} \left\{\frac{(c + \xi - a) \sin^{2} \left[\frac{(2j - 1)\pi}{n + 1} \right] + (d + \eta - b) \sin \left[\frac{(2j - 1)\pi}{n + 1} \right] \sin \left[\frac{(4j - 2)\pi}{n + 1} \right]}{\mu_{k} - \lambda_{2j-1}} \right\}} \frac{\sin \left(\frac{3\pi}{n+1} \right)}{\sqrt{n+1}(\mu_{k} - \lambda_{3})} \\
      \vdots \\[3pt]
      \frac{2\sin\left(\frac{2n\pi}{n+1} \right)}{\sqrt{n+1}(\mu_{k} - \lambda_{n-1})} + \frac{8 \underset{j=1}{\overset{\frac{n+1}{2}}{\sum}} \left\{\frac{(c + \xi - a) \sin \left[\frac{(2j - 1)\pi}{n + 1} \right] \sin \left[\frac{(4j - 2)\pi}{n + 1} \right] + (d + \eta - b) \sin^{2} \left[\frac{(4j - 2)\pi}{n + 1} \right]}{\mu_{k} - \lambda_{2j-1}} \right\}}{n + 1 - 4 \underset{j=1}{\overset{\frac{n+1}{2}}{\sum}} \left\{\frac{(c + \xi - a) \sin^{2} \left[\frac{(2j - 1)\pi}{n + 1} \right] + (d + \eta - b) \sin \left[\frac{(2j - 1)\pi}{n + 1} \right] \sin \left[\frac{(4j - 2)\pi}{n + 1} \right]}{\mu_{k} - \lambda_{2j-1}} \right\}} \frac{\sin \left(\frac{n\pi}{n+1} \right)}{\sqrt{n+1}(\mu_{k} - \lambda_{n-1})} \\[15pt]
      0 \\
      \vdots \\[3pt]
      0
    \end{array}
    \right]
\end{equation}
is an eigenvector of $\mathbf{H}_{n}$ associated to $\mu_{k}$, $k=1,\ldots, \frac{n+1}{2}$ and \begin{equation}\label{eq:3.11b}
    \mathbf{S}_{n} \mathbf{P}_{n} \left[
    \begin{array}{c}
      0 \\
      \vdots \\[3pt]
      0 \\[5pt]
      \frac{2\sin\left(\frac{4\pi}{n+1} \right)}{\sqrt{n+1}(\nu_{k} - \lambda_{2})} + \frac{8 \underset{j=1}{\overset{\frac{n-1}{2}}{\sum}} \left[\frac{(c + \xi - a) \sin \left(\frac{2j\pi}{n + 1} \right) \sin \left(\frac{4j\pi}{n + 1} \right) + (d + \eta - b) \sin^{2} \left(\frac{4j\pi}{n + 1} \right)}{\nu_{k} - \lambda_{2j}} \right]}{n + 1 - 4 \underset{j=1}{\overset{\frac{n-1}{2}}{\sum}} \left[\frac{(c + \xi - a) \sin \left(\frac{2j\pi}{n + 1} \right) \sin \left(\frac{4j\pi}{n + 1} \right) + (d + \eta - b) \sin^{2} \left(\frac{4j\pi}{n + 1} \right)}{\nu_{k} - \lambda_{2j}} \right]} \frac{\sin \left(\frac{2\pi}{n+1} \right)}{\sqrt{n+1}(\nu_{k} - \lambda_{2})} \\[20pt]
      \frac{2\sin\left(\frac{8\pi}{n+1} \right)}{\sqrt{n+1}(\nu_{k} - \lambda_{4})} + \frac{8 \underset{j=1}{\overset{\frac{n-1}{2}}{\sum}} \left[\frac{(c + \xi - a) \sin \left(\frac{2j\pi}{n + 1} \right) \sin \left(\frac{4j\pi}{n + 1} \right) + (d + \eta - b) \sin^{2} \left(\frac{4j\pi}{n + 1} \right)}{\nu_{k} - \lambda_{2j}} \right]}{n + 1 - 4 \underset{j=1}{\overset{\frac{n-1}{2}}{\sum}} \left[\frac{(c + \xi - a) \sin \left(\frac{2j\pi}{n + 1} \right) \sin \left(\frac{4j\pi}{n + 1} \right) + (d + \eta - b) \sin^{2} \left(\frac{4j\pi}{n + 1} \right)}{\nu_{k} - \lambda_{2j}} \right]} \frac{\sin \left(\frac{4\pi}{n+1} \right)}{\sqrt{n+1}(\nu_{k} - \lambda_{4})} \\
      \vdots \\[5pt]
      \frac{2\sin\left[\frac{2(n-1)\pi}{n+1} \right]}{\sqrt{n+1}(\nu_{k} - \lambda_{n})} + \frac{8 \underset{j=1}{\overset{\frac{n-1}{2}}{\sum}} \left[\frac{(c + \xi - a) \sin \left(\frac{2j\pi}{n + 1} \right) \sin \left(\frac{4j\pi}{n + 1} \right) + (d + \eta - b) \sin^{2} \left(\frac{4j\pi}{n + 1} \right)}{\nu_{k} - \lambda_{2j}} \right]}{n + 1 - 4 \underset{j=1}{\overset{\frac{n-1}{2}}{\sum}} \left[\frac{(c + \xi - a) \sin \left(\frac{2j\pi}{n + 1} \right) \sin \left(\frac{4j\pi}{n + 1} \right) + (d + \eta - b) \sin^{2} \left(\frac{4j\pi}{n + 1} \right)}{\nu_{k} - \lambda_{2j}} \right]} \frac{\sin \left[\frac{(n-1)\pi}{n+1} \right]}{\sqrt{n+1}(\nu_{k} - \lambda_{n})}
    \end{array}
    \right]
\end{equation}
is an eigenvector of $\mathbf{H}_{n}$ associated to $\nu_{k}$, $k=1,\ldots, \frac{n-1}{2}$.
\end{subequations}
\end{thr}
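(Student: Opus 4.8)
The plan is to lean on the orthogonal block diagonalization of Lemma~\ref{lem3}. Since $\mathbf{S}_{n}$ is orthogonal and $\mathbf{P}_{n}$ is a permutation matrix, $\mathbf{Q}_{n}:=\mathbf{S}_{n}\mathbf{P}_{n}$ is orthogonal and $\mathbf{Q}_{n}^{\top}\mathbf{H}_{n}\mathbf{Q}_{n}$ is block diagonal with diagonal blocks $\boldsymbol{\Phi}$ and $\boldsymbol{\Psi}$; hence if $\mathbf{z}$ is an eigenvector of $\boldsymbol{\Phi}$ for an eigenvalue $\mu$, then $\mathbf{Q}_{n}$ applied to the vector having $\mathbf{z}$ in its first $\tfrac{n}{2}$ coordinates and zeros in the rest is an eigenvector of $\mathbf{H}_{n}$ for $\mu$, and symmetrically for $\boldsymbol{\Psi}$. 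So it suffices to exhibit, for each prescribed $\mu_{k}$ (resp.\ $\nu_{k}$), an eigenvector of $\boldsymbol{\Phi}$ (resp.\ $\boldsymbol{\Psi}$); the four cases (even/odd $\times$ $\boldsymbol{\Phi}/\boldsymbol{\Psi}$) being identical up to relabelling of indices, I would carry out the even $\boldsymbol{\Phi}$ case. Write $\mathbf{D}=\mathrm{diag}(\lambda_{1},\lambda_{3},\dots,\lambda_{n-1})$, $\theta=c+\xi-a$, $\vartheta=d+\eta-b$, so that $\boldsymbol{\Phi}_{\frac{n}{2}}=\mathbf{D}+\theta\mathbf{x}\mathbf{x}^{\top}+\vartheta\mathbf{x}\mathbf{y}^{\top}+\vartheta\mathbf{y}\mathbf{x}^{\top}$, and recall from the proof of Lemma~\ref{lem4} that $\mathbf{x},\mathbf{y}$ are orthonormal, in particular linearly independent. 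Since $\mu_{k}\neq\lambda_{2j-1}$ for all $j$ by hypothesis, $\mathbf{R}:=(\mu_{k}\mathbf{I}-\mathbf{D})^{-1}$ is well defined.

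I would then seek an eigenvector of the shape $\mathbf{z}=\mathbf{R}(\alpha\mathbf{x}+\beta\mathbf{y})$. Using $(\mathbf{D}-\mu_{k}\mathbf{I})\mathbf{R}=-\mathbf{I}$ and the independence of $\mathbf{x},\mathbf{y}$, the equation $\boldsymbol{\Phi}_{\frac{n}{2}}\mathbf{z}=\mu_{k}\mathbf{z}$ reduces to the homogeneous system $\alpha=\theta(\alpha A+\beta B)+\vartheta(\alpha B+\beta C)$ and $\beta=\vartheta(\alpha A+\beta B)$ in the unknowns $\alpha,\beta$, where $A=\mathbf{x}^{\top}\mathbf{R}\mathbf{x}$, $B=\mathbf{x}^{\top}\mathbf{R}\mathbf{y}$, $C=\mathbf{y}^{\top}\mathbf{R}\mathbf{y}$; written out with the sines, $A=\tfrac{4}{n+1}\sum_{j}\sin^{2}[(2j-1)\pi/(n+1)]/(\mu_{k}-\lambda_{2j-1})$, and likewise for $B$ and $C$. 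The determinant of this $2\times2$ system is $(1-\theta A-\vartheta B)(1-\vartheta B)-\vartheta A(\theta B+\vartheta C)=1-\theta A-2\vartheta B+\vartheta^{2}(B^{2}-AC)$. The crucial observation is that this determinant equals $f(\mu_{k})$: the summand $-\theta A-2\vartheta B$ reproduces the single sum in \eqref{eq:3.4b}, while the Lagrange identity $\bigl(\sum_{j}\tfrac{u_{j}^{2}}{\mu-\lambda_{j}}\bigr)\bigl(\sum_{j}\tfrac{w_{j}^{2}}{\mu-\lambda_{j}}\bigr)-\bigl(\sum_{j}\tfrac{u_{j}w_{j}}{\mu-\lambda_{j}}\bigr)^{2}=\sum_{j<\ell}\tfrac{(u_{j}w_{\ell}-u_{\ell}w_{j})^{2}}{(\mu-\lambda_{j})(\mu-\lambda_{\ell})}$ converts $\vartheta^{2}(B^{2}-AC)$ into exactly minus the double sum in \eqref{eq:3.4b}. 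As $\mu_{k}$ is a zero of $f$, the determinant vanishes, so the system admits a nontrivial solution.

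Finally, the hypothesis $\sum_{j}\{\cdots\}\neq\tfrac{n+1}{4}$ says precisely $1-\theta A-\vartheta B\neq0$, so I would normalize $\beta=1$; the first equation then forces $\alpha=(\theta B+\vartheta C)/(1-\theta A-\vartheta B)$, and the second equation $1-\vartheta B=\vartheta\alpha A$ is, after clearing denominators, equivalent to $f(\mu_{k})=0$ and therefore automatic. (The hypothesis $b\neq d+\eta$, i.e.\ $\vartheta\neq0$, is what makes the shape of \eqref{eq:3.10a} — whose $\mathbf{y}$-part carries the coefficient $1$ — the appropriate one; for $\vartheta=0$ an eigenvector would instead be proportional to $\mathbf{R}\mathbf{x}$.) Re-expanding $A,B,C$ in the scaled sines one checks that $\alpha=\tfrac12 R_{k}$, where $R_{k}$ is the large fraction displayed in \eqref{eq:3.10a}, so that the $j$-th coordinate of $\mathbf{z}=\mathbf{R}(\alpha\mathbf{x}+\mathbf{y})$ equals $\tfrac{2\sin[(4j-2)\pi/(n+1)]}{\sqrt{n+1}\,(\mu_{k}-\lambda_{2j-1})}+R_{k}\,\tfrac{\sin[(2j-1)\pi/(n+1)]}{\sqrt{n+1}\,(\mu_{k}-\lambda_{2j-1})}$, i.e.\ the bracketed vector of \eqref{eq:3.10a}; appending zeros for the $\boldsymbol{\Psi}$-block and applying $\mathbf{S}_{n}\mathbf{P}_{n}$ produces the stated eigenvector. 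Formulas \eqref{eq:3.10b}, \eqref{eq:3.11a}, \eqref{eq:3.11b} follow identically after replacing $\mathbf{x},\mathbf{y}$ by $\mathbf{v},\mathbf{w}$, odd indices by even ones, and $\tfrac{n}{2}$ by $\tfrac{n+1}{2}$ or $\tfrac{n-1}{2}$ as appropriate. The one genuinely non-mechanical step is recognizing the $2\times2$ determinant as $f$ (resp.\ $g$) via the Lagrange identity; all the rest is bookkeeping whose chief pitfall is keeping the normalization and the index shifts consistent across the four cases.
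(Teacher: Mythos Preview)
Your proof is correct and follows the same overall scheme as the paper: reduce via Lemma~\ref{lem3} to the block $\boldsymbol{\Phi}_{\frac{n}{2}}$ (resp.\ $\boldsymbol{\Psi}_{\frac{n}{2}}$), produce an eigenvector there, and push it back through $\mathbf{S}_{n}\mathbf{P}_{n}$. The difference lies in how the block eigenvector is obtained. The paper peels off the single rank-one term $\vartheta\,\mathbf{y}\mathbf{x}^{\top}$ and, citing \cite{Bunch78}, writes the eigenvector directly as $\bigl[\mu_{k}\mathbf{I}-\mathrm{diag}(\lambda_{1},\dots,\lambda_{n-1})-\theta\mathbf{x}\mathbf{x}^{\top}-\vartheta\mathbf{x}\mathbf{y}^{\top}\bigr]^{-1}\mathbf{y}$, which it then expands (implicitly via two Sherman--Morrison steps) to obtain \eqref{eq:3.10a}; the hypothesis $1-\theta A-\vartheta B\neq0$ is exactly the invertibility of that bracketed matrix. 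You instead pass all the way to the diagonal resolvent $\mathbf{R}=(\mu_{k}\mathbf{I}-\mathbf{D})^{-1}$, posit $\mathbf{z}=\mathbf{R}(\alpha\mathbf{x}+\beta\mathbf{y})$, and reduce to a $2\times2$ homogeneous system whose determinant you identify with $f(\mu_{k})$ via the Lagrange identity. Your route is a bit more self-contained (no external rank-one eigenvector lemma, no iterated Sherman--Morrison) and makes transparent both why a nontrivial $(\alpha,\beta)$ exists and exactly which hypothesis permits the normalization $\beta=1$; the paper's route is shorter once one accepts the cited result. The two computations are of course equivalent: your $\mathbf{R}(\alpha\mathbf{x}+\mathbf{y})$ with $\alpha=(\theta B+\vartheta C)/(1-\theta A-\vartheta B)$ is precisely the paper's $M^{-1}\mathbf{y}$ after expansion.
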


\begin{proof}
Since both assertions can be proven in the same way, we only prove (a). Let $n \geqslant 5$ be even. We can rewrite the matricial equation $(\mu_{k} \mathbf{I}_{n} - \mathbf{H}_{n}) \mathbf{q} = \mathbf{0}$ as
\begin{equation}\label{eq:3.12}
\mathbf{S}_{n} \mathbf{P}_{n} \left[
\setlength{\extrarowheight}{2pt}
\begin{array}{c|c}
\mu_{k} \mathbf{I}_{\frac{n}{2}} - \boldsymbol{\Phi}_{\frac{n}{2}} & \mathbf{O} \\[2pt] \hline
\mathbf{O} & \mu_{k} \mathbf{I}_{\frac{n}{2}} - \boldsymbol{\Psi}_{\frac{n}{2}}
\end{array}
\right] \mathbf{P}_{n}^{\top} \mathbf{S}_{n} \mathbf{q} = \mathbf{0}
\end{equation}
where $\mathbf{S}_{n}$ is the matrix \eqref{eq:2.1}, $\mathbf{P}_{n}$ is the permutation matrix \eqref{eq:2.4c}, $\boldsymbol{\Phi}_{\frac{n}{2}}$ and $\boldsymbol{\Psi}_{\frac{n}{2}}$ are given by \eqref{eq:2.4d} and \eqref{eq:2.4e}, respectively. Thus,
\begin{gather*}
\left[\mu_{k} \mathbf{I}_{\frac{n}{2}} - \mathrm{diag} \left(\lambda_{1},\lambda_{3},\ldots,\lambda_{n-1} \right) - (c + \xi - a) \mathbf{x} \mathbf{x}^{\top} - (d + \eta - b) \mathbf{x} \mathbf{y}^{\top} - (d + \eta - b) \mathbf{y} \mathbf{x}^{\top} \right] \mathbf{q}_{1} = \mathbf{0}, \\
\left[\mu_{k} \mathbf{I}_{\frac{n}{2}} - \mathrm{diag} \left(\lambda_{2},\lambda_{4},\ldots,\lambda_{n} \right) - (c + \xi - a) \mathbf{v} \mathbf{v}^{\top} - (d + \eta - b) \mathbf{v} \mathbf{w}^{\top} - (d + \eta - b) \mathbf{w} \mathbf{v}^{\top} \right] \mathbf{q}_{2} = \mathbf{0}, \\
\left[
\begin{array}{c}
\mathbf{q}_{1} \\
\mathbf{q}_{2}
\end{array}
\right] = \mathbf{P}_{n}^{\top} \mathbf{S}_{n} \mathbf{q}
\end{gather*}
that is,
\begin{gather*}
\mathbf{q}_{1} = \alpha \left[\mu_{k} \mathbf{I}_{\frac{n}{2}} - \mathrm{diag} \left(\lambda_{1},\lambda_{3},\ldots,\lambda_{n-1} \right) - (c + \xi - a) \mathbf{x} \mathbf{x}^{\top} - (d + \eta - b) \mathbf{x} \mathbf{y}^{\top} \right]^{-1}  \mathbf{y}, \\
\mathbf{q}_{2} = \mathbf{0}
\end{gather*}
for $\alpha \neq 0$ (see \cite{Bunch78}, page $41$) and
\begin{equation*}
\mathbf{q} =  \mathbf{S}_{n} \mathbf{P}_{n}  \left[
\begin{array}{c}
\alpha \left[\mu_{k} \mathbf{I}_{\frac{n}{2}} - \mathrm{diag} \left(\lambda_{1},\lambda_{3},\ldots,\lambda_{n-1} \right) - (c + \xi - a) \mathbf{x} \mathbf{x}^{\top} - (d + \eta - b) \mathbf{x} \mathbf{y}^{\top} \right]^{-1}  \mathbf{y} \\
\mathbf{0}
\end{array}
\right]
\end{equation*}
is a nontrivial solution of \eqref{eq:3.12}. Thus, choosing $\alpha = 1$ we conclude that \eqref{eq:3.10a} is an eigenvector of $\mathbf{H}_{n}$ associated to the eigenvalue $\mu_{k}$. Similarly, from $(\nu_{k} \mathbf{I}_{n} - \mathbf{H}_{n}) \mathbf{q} = \mathbf{0}$ we have
\begin{equation*}
\mathbf{S}_{n} \mathbf{P}_{n} \left[
\setlength{\extrarowheight}{2pt}
\begin{array}{c|c}
\nu_{k} \mathbf{I}_{\frac{n}{2}} - \boldsymbol{\Phi}_{\frac{n}{2}} & \mathbf{O} \\[2pt] \hline
\mathbf{O} & \nu_{k} \mathbf{I}_{\frac{n}{2}} - \boldsymbol{\Psi}_{\frac{n}{2}}
\end{array}
\right] \mathbf{P}_{n}^{\top} \mathbf{S}_{n} \mathbf{q} = \mathbf{0}
\end{equation*}
and
\begin{equation*}
\mathbf{q} =  \mathbf{S}_{n} \mathbf{P}_{n} \left[
\begin{array}{c}
\mathbf{0} \\
\alpha \left[\nu_{k} \mathbf{I}_{\frac{n}{2}} - \mathrm{diag} \left(\lambda_{2},\lambda_{4},\ldots,\lambda_{n} \right) - (c + \xi - a) \mathbf{v} \mathbf{v}^{\top} - (d + \eta - b) \mathbf{v} \mathbf{w}^{\top} \right]^{-1} \mathbf{w}
\end{array}
\right],
\end{equation*}
for $\alpha \neq 0$, which is an eigenvector of $\mathbf{H}_{n}$ associated to the eigenvalue $\nu_{k}$.
\end{proof}

\subsection{Expression of $\mathbf{H}_{n}^{-1}$} \label{sec:4}

\indent

The orthogonal block diagonalization presented in Lemma~\ref{lem3} and Miller's formula for the inverse of the sum of nonsingular matrices lead us to an explicit expression for the inverse of $\mathbf{H}_{n}$.


\begin{thr}
Let $n \geqslant 5$ be an integer, $a,b,c,d, \xi, \eta \in \mathbb{C}$, $\lambda_{k}$, $k = 1,\ldots,n$ be given by \eqref{eq:2.3} and $\mathbf{H}_{n}$ the $n \times n$ matrix \eqref{eq:1.1}. If $\lambda_{k} \neq 0$ for every $k=1,\ldots,n$, $\mathbf{H}_{n}$ is nonsingular and:

\medskip

\noindent \textnormal{(a)} $n$ is even then
\begin{equation*}
\mathbf{H}_{n}^{-1} = \mathbf{S}_{n} \mathbf{P}_{n} \left[
\begin{array}{cc} \mathbf{Q}_{\frac{n}{2}} & \mathbf{O} \\
\mathbf{O} & \mathbf{R}_{\frac{n}{2}}
\end{array}
\right] \mathbf{P}_{n}^{\top} \mathbf{S}_{n}
\end{equation*}
where $\mathbf{S}_{n}$ is the $n \times n$ matrix \eqref{eq:2.1}, $\mathbf{P}_{n}$ is the $n \times n$ permutation matrix \eqref{eq:2.4c},
\begin{subequations}
\begin{equation}\label{eq:3.13a}
\begin{split}
\mathbf{Q}_{\frac{n}{2}} = \boldsymbol{\Upsilon}_{\frac{n}{2}}^{-1} -  & \tfrac{(d + \eta - b) + (d + \eta - b)^{2} \mathbf{y}^{\top} \boldsymbol{\Upsilon}_{\frac{n}{2}}^{-1} \mathbf{x}}{\rho} \boldsymbol{\Upsilon}_{\frac{n}{2}}^{-1} \left(\mathbf{y} \mathbf{x}^{\top} + \mathbf{x} \mathbf{y}^{\top} \right) \boldsymbol{\Upsilon}_{\frac{n}{2}}^{-1} + \\
& \tfrac{(d + \eta - b)^{2}\mathbf{y}^{\top} \boldsymbol{\Upsilon}_{\frac{n}{2}}^{-1} \mathbf{y} - (c + \xi - a)}{\rho} \boldsymbol{\Upsilon}_{\frac{n}{2}}^{-1} \mathbf{x} \mathbf{x}^{\top} \boldsymbol{\Upsilon}_{\frac{n}{2}}^{-1} + \tfrac{(d + \eta - b)^{2} \mathbf{x}^{\top} \boldsymbol{\Upsilon}_{\frac{n}{2}}^{-1} \mathbf{x}}{\rho} \boldsymbol{\Upsilon}_{\frac{n}{2}}^{-1} \mathbf{y} \mathbf{y}^{\top} \boldsymbol{\Upsilon}_{\frac{n}{2}}^{-1},
\end{split}
\end{equation}
with $\boldsymbol{\Upsilon}_{\frac{n}{2}} := \mathrm{diag} \left(\lambda_{1},\lambda_{3},\ldots,\lambda_{n-1} \right)$, $\mathbf{x}, \mathbf{y}$ given by \eqref{eq:2.4a},
\begin{equation}\label{eq:3.13b}
\begin{split}
\rho = 1 + (c + \xi - a) \mathbf{x}^{\top} & \boldsymbol{\Upsilon}_{\frac{n}{2}}^{-1} \mathbf{x} + 2 (d + \eta - b) \mathbf{y}^{\top} \boldsymbol{\Upsilon}_{\frac{n}{2}}^{-1} \mathbf{x} + \\
& (d + \eta - b)^{2} \left[\big(\mathbf{x}^{\top} \boldsymbol{\Upsilon}_{\frac{n}{2}}^{-1} \mathbf{y} \big)^{2} - \big(\mathbf{x}^{\top} \boldsymbol{\Upsilon}_{\frac{n}{2}}^{-1} \mathbf{x} \big) \big(\mathbf{y}^{\top} \boldsymbol{\Upsilon}_{\frac{n}{2}}^{-1} \mathbf{y} \big) \right],
\end{split}
\end{equation}
and
\begin{equation}\label{eq:3.13c}
\begin{split}
\mathbf{R}_{\frac{n}{2}} = \boldsymbol{\Delta}_{\frac{n}{2}}^{-1} -  & \tfrac{(d + \eta - b) + (d + \eta - b)^{2} \mathbf{w}^{\top} \boldsymbol{\Delta}_{\frac{n}{2}}^{-1} \mathbf{v}}{\rho} \boldsymbol{\Delta}_{\frac{n}{2}}^{-1} \left(\mathbf{w} \mathbf{v}^{\top} + \mathbf{v} \mathbf{w}^{\top} \right) \boldsymbol{\Delta}_{\frac{n}{2}}^{-1} + \\
& \tfrac{(d + \eta - b)^{2}\mathbf{w}^{\top} \boldsymbol{\Delta}_{\frac{n}{2}}^{-1} \mathbf{w} - (c + \xi - a)}{\rho} \boldsymbol{\Delta}_{\frac{n}{2}}^{-1} \mathbf{v} \mathbf{v}^{\top} \boldsymbol{\Delta}_{\frac{n}{2}}^{-1} + \tfrac{(d + \eta - b)^{2} \mathbf{v}^{\top} \boldsymbol{\Delta}_{\frac{n}{2}}^{-1} \mathbf{v}}{\rho} \boldsymbol{\Delta}_{\frac{n}{2}}^{-1} \mathbf{w} \mathbf{w}^{\top} \boldsymbol{\Delta}_{\frac{n}{2}}^{-1},
\end{split}
\end{equation}
with $\boldsymbol{\Delta}_{\frac{n}{2}} := \mathrm{diag} \left(\lambda_{2},\lambda_{4},\ldots,\lambda_{n} \right)$, $\mathbf{v}, \mathbf{w}$ given by \eqref{eq:2.5a} and
\begin{equation}\label{eq:3.13d}
\begin{split}
\varrho = 1 + (c + \xi - a) \mathbf{v}^{\top} & \boldsymbol{\Delta}_{\frac{n}{2}}^{-1} \mathbf{v} + 2 (d + \eta - b) \mathbf{w}^{\top} \boldsymbol{\Delta}_{\frac{n}{2}}^{-1} \mathbf{v} + \\
& (d + \eta - b)^{2} \left[\big(\mathbf{v}^{\top} \boldsymbol{\Delta}_{\frac{n}{2}}^{-1} \mathbf{w} \big)^{2} - \big(\mathbf{v}^{\top} \boldsymbol{\Delta}_{\frac{n}{2}}^{-1} \mathbf{v} \big) \big(\mathbf{w}^{\top} \boldsymbol{\Delta}_{\frac{n}{2}}^{-1} \mathbf{w} \big) \right].
\end{split}
\end{equation}
\end{subequations}

\medskip

\noindent \textnormal{(b)} $n$ is odd then
\begin{equation*}
\mathbf{H}_{n}^{-1} = \mathbf{S}_{n} \mathbf{P}_{n} \left[
\begin{array}{cc} \mathbf{Q}_{\frac{n+1}{2}} & \mathbf{O} \\
\mathbf{O} & \mathbf{R}_{\frac{n-1}{2}}
\end{array}
\right] \mathbf{P}_{n}^{\top} \mathbf{S}_{n}
\end{equation*}
where $\mathbf{S}_{n}$ is the $n \times n$ matrix \eqref{eq:2.1}, $\mathbf{P}_{n}$ is the $n \times n$ permutation matrix \eqref{eq:2.5c},
\begin{subequations}
\begin{equation}\label{eq:3.14a}
\begin{split}
\mathbf{Q}_{\frac{n+1}{2}} =  & \boldsymbol{\Upsilon}_{\frac{n+1}{2}}^{-1} - \tfrac{(d + \eta - b) + (d + \eta - b)^{2} \mathbf{y}^{\top} \boldsymbol{\Upsilon}_{\frac{n+1}{2}}^{-1} \mathbf{x}}{\rho} \boldsymbol{\Upsilon}_{\frac{n+1}{2}}^{-1} \left(\mathbf{y} \mathbf{x}^{\top} + \mathbf{x} \mathbf{y}^{\top} \right) \boldsymbol{\Upsilon}_{\frac{n+1}{2}}^{-1} + \\
& \tfrac{(d + \eta - b)^{2}\mathbf{y}^{\top} \boldsymbol{\Upsilon}_{\frac{n+1}{2}}^{-1} \mathbf{y} - (c + \xi - a)}{\rho} \boldsymbol{\Upsilon}_{\frac{n+1}{2}}^{-1} \mathbf{x} \mathbf{x}^{\top} \boldsymbol{\Upsilon}_{\frac{n+1}{2}}^{-1} + \tfrac{(d + \eta - b)^{2} \mathbf{x}^{\top} \boldsymbol{\Upsilon}_{\frac{n+1}{2}}^{-1} \mathbf{x}}{\rho} \boldsymbol{\Upsilon}_{\frac{n+1}{2}}^{-1} \mathbf{y} \mathbf{y}^{\top} \boldsymbol{\Upsilon}_{\frac{n+1}{2}}^{-1},
\end{split}
\end{equation}
with $\boldsymbol{\Upsilon}_{\frac{n+1}{2}} := \mathrm{diag} \left(\lambda_{1},\lambda_{3},\ldots,\lambda_{n} \right)$, $\mathbf{x}, \mathbf{y}$ given by \eqref{eq:2.5a},
\begin{equation}\label{eq:3.14b}
\begin{split}
\rho = 1 + (c + \xi - a) \mathbf{x}^{\top} & \boldsymbol{\Upsilon}_{\frac{n+1}{2}}^{-1} \mathbf{x} + 2 (d + \eta - b) \mathbf{y}^{\top} \boldsymbol{\Upsilon}_{\frac{n+1}{2}}^{-1} \mathbf{x} + \\
& (d + \eta - b)^{2} \left[\big(\mathbf{x}^{\top} \boldsymbol{\Upsilon}_{\frac{n+1}{2}}^{-1} \mathbf{y} \big)^{2} - \big(\mathbf{x}^{\top} \boldsymbol{\Upsilon}_{\frac{n+1}{2}}^{-1} \mathbf{x} \big) \big(\mathbf{y}^{\top} \boldsymbol{\Upsilon}_{\frac{n+1}{2}}^{-1} \mathbf{y} \big) \right],
\end{split}
\end{equation}
and
\begin{equation}\label{eq:3.14c}
\begin{split}
\mathbf{R}_{\frac{n-1}{2}} =  & \boldsymbol{\Delta}_{\frac{n-1}{2}}^{-1} - \tfrac{(d + \eta - b) + (d + \eta - b)^{2} \mathbf{w}^{\top} \boldsymbol{\Delta}_{\frac{n-1}{2}}^{-1} \mathbf{v}}{\rho} \boldsymbol{\Delta}_{\frac{n-1}{2}}^{-1} \left(\mathbf{w} \mathbf{v}^{\top} + \mathbf{v} \mathbf{w}^{\top} \right) \boldsymbol{\Delta}_{\frac{n-1}{2}}^{-1} + \\
& \tfrac{(d + \eta - b)^{2}\mathbf{w}^{\top} \boldsymbol{\Delta}_{\frac{n-1}{2}}^{-1} \mathbf{w} - (c + \xi - a)}{\rho} \boldsymbol{\Delta}_{\frac{n-1}{2}}^{-1} \mathbf{v} \mathbf{v}^{\top} \boldsymbol{\Delta}_{\frac{n-1}{2}}^{-1} + \tfrac{(d + \eta - b)^{2} \mathbf{v}^{\top} \boldsymbol{\Delta}_{\frac{n-1}{2}}^{-1} \mathbf{v}}{\rho} \boldsymbol{\Delta}_{\frac{n-1}{2}}^{-1} \mathbf{w} \mathbf{w}^{\top} \boldsymbol{\Delta}_{\frac{n-1}{2}}^{-1},
\end{split}
\end{equation}
with $\boldsymbol{\Delta}_{\frac{n-1}{2}} := \mathrm{diag} \left(\lambda_{2},\lambda_{4},\ldots,\lambda_{n-1} \right)$, $\mathbf{v}, \mathbf{w}$ in \eqref{eq:2.5b},
\begin{equation}\label{eq:3.14d}
\begin{split}
\varrho = 1 + (c + \xi - a) \mathbf{v}^{\top} & \boldsymbol{\Delta}_{\frac{n-1}{2}}^{-1} \mathbf{v} + 2 (d + \eta - b) \mathbf{w}^{\top} \boldsymbol{\Delta}_{\frac{n-1}{2}}^{-1} \mathbf{v} + \\
& (d + \eta - b)^{2} \left[\big(\mathbf{v}^{\top} \boldsymbol{\Delta}_{\frac{n-1}{2}}^{-1} \mathbf{w} \big)^{2} - \big(\mathbf{v}^{\top} \boldsymbol{\Delta}_{\frac{n-1}{2}}^{-1} \mathbf{v} \big) \big(\mathbf{w}^{\top} \boldsymbol{\Delta}_{\frac{n-1}{2}}^{-1} \mathbf{w} \big) \right].
\end{split}
\end{equation}
\end{subequations}
\end{thr}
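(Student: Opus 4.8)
Throughout put $\theta := c+\xi-a$ and $\vartheta := d+\eta-b$. The plan is to invert the orthogonal block diagonalization of Lemma~\ref{lem3} block by block, and then to invert each diagonal-plus-low-rank block by means of Miller's formula for the inverse of a sum of matrices \cite{Miller81}. Since $\mathbf{S}_{n}$ is symmetric, orthogonal and involutory we have $\mathbf{S}_{n}^{-1} = \mathbf{S}_{n}$, and since $\mathbf{P}_{n}$ is a permutation matrix $\mathbf{P}_{n}^{-1} = \mathbf{P}_{n}^{\top}$; hence, for $n$ even, Lemma~\ref{lem3}(a) yields
\[
\mathbf{H}_{n}^{-1} = \mathbf{S}_{n} \mathbf{P}_{n} \left[ \begin{array}{cc} \boldsymbol{\Phi}_{\frac{n}{2}}^{-1} & \mathbf{O} \\ \mathbf{O} & \boldsymbol{\Psi}_{\frac{n}{2}}^{-1} \end{array} \right] \mathbf{P}_{n}^{\top} \mathbf{S}_{n},
\]
and likewise for $n$ odd from Lemma~\ref{lem3}(b) with blocks $\boldsymbol{\Phi}_{\frac{n+1}{2}}^{-1}, \boldsymbol{\Psi}_{\frac{n-1}{2}}^{-1}$. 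Because $\lambda_{k} \neq 0$ for every $k$, the diagonal parts $\boldsymbol{\Upsilon} = \mathrm{diag}(\lambda_{1},\lambda_{3},\ldots)$ and $\boldsymbol{\Delta} = \mathrm{diag}(\lambda_{2},\lambda_{4},\ldots)$ are nonsingular; moreover, conjugation by the invertible matrix $\mathbf{S}_{n}\mathbf{P}_{n}$ leaves the determinant unchanged, so $\det \mathbf{H}_{n} = \det \boldsymbol{\Phi} \cdot \det \boldsymbol{\Psi}$ and nonsingularity of $\mathbf{H}_{n}$ forces both $\boldsymbol{\Phi}$ and $\boldsymbol{\Psi}$ to be nonsingular. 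It remains to produce the closed forms for $\boldsymbol{\Phi}^{-1}$ and $\boldsymbol{\Psi}^{-1}$.

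Next, write $\boldsymbol{\Phi}_{\frac{n}{2}} = \boldsymbol{\Upsilon}_{\frac{n}{2}} + \mathbf{U}\mathbf{C}\mathbf{U}^{\top}$ with $\mathbf{U} := \big[\,\mathbf{x} \ \ \mathbf{y}\,\big]$ and $\mathbf{C} := \left[ \begin{array}{cc} \theta & \vartheta \\ \vartheta & 0 \end{array} \right]$ ($\mathbf{x},\mathbf{y}$ as in \eqref{eq:2.4a}), which reproduces $\theta \mathbf{x}\mathbf{x}^{\top} + \vartheta \mathbf{x}\mathbf{y}^{\top} + \vartheta \mathbf{y}\mathbf{x}^{\top}$ since $\mathbf{C}$ is symmetric. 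Applying the inversion identity
\[
(\boldsymbol{\Upsilon} + \mathbf{U}\mathbf{C}\mathbf{U}^{\top})^{-1} = \boldsymbol{\Upsilon}^{-1} - \boldsymbol{\Upsilon}^{-1}\mathbf{U}\big(\mathbf{I}_{2} + \mathbf{C}\mathbf{U}^{\top}\boldsymbol{\Upsilon}^{-1}\mathbf{U}\big)^{-1}\mathbf{C}\mathbf{U}^{\top}\boldsymbol{\Upsilon}^{-1}
\]
(legitimate because $\boldsymbol{\Upsilon}$ and $\boldsymbol{\Phi}$ are nonsingular, which by Sylvester's determinant identity is equivalent to $\mathbf{I}_{2} + \mathbf{C}\mathbf{U}^{\top}\boldsymbol{\Upsilon}^{-1}\mathbf{U}$ being nonsingular, its determinant equalling $\det\boldsymbol{\Phi}/\det\boldsymbol{\Upsilon}$), one computes the $2 \times 2$ matrix $\mathbf{I}_{2} + \mathbf{C}\mathbf{U}^{\top}\boldsymbol{\Upsilon}^{-1}\mathbf{U}$ entrywise in terms of the three scalars $\mathbf{x}^{\top}\boldsymbol{\Upsilon}^{-1}\mathbf{x}$, $\mathbf{x}^{\top}\boldsymbol{\Upsilon}^{-1}\mathbf{y}$, $\mathbf{y}^{\top}\boldsymbol{\Upsilon}^{-1}\mathbf{y}$, and checks that its determinant is exactly $\rho$ of \eqref{eq:3.13b}. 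Inverting that $2 \times 2$ matrix, multiplying on the right by $\mathbf{C}$, and sandwiching the outcome between $\boldsymbol{\Upsilon}^{-1}\mathbf{U}$ and $\mathbf{U}^{\top}\boldsymbol{\Upsilon}^{-1}$ collapses the correction onto the three outer products $\boldsymbol{\Upsilon}^{-1}\mathbf{x}\mathbf{x}^{\top}\boldsymbol{\Upsilon}^{-1}$, $\boldsymbol{\Upsilon}^{-1}(\mathbf{x}\mathbf{y}^{\top} + \mathbf{y}\mathbf{x}^{\top})\boldsymbol{\Upsilon}^{-1}$, $\boldsymbol{\Upsilon}^{-1}\mathbf{y}\mathbf{y}^{\top}\boldsymbol{\Upsilon}^{-1}$ with precisely the coefficients displayed in \eqref{eq:3.13a}. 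Performing the identical computation with $(\boldsymbol{\Delta}_{\frac{n}{2}},\mathbf{v},\mathbf{w})$ in place of $(\boldsymbol{\Upsilon}_{\frac{n}{2}},\mathbf{x},\mathbf{y})$ delivers \eqref{eq:3.13c}--\eqref{eq:3.13d}, which together with the block reduction proves (a); part (b) is word-for-word the same, only the block sizes and the index sets change, one quoting Lemma~\ref{lem3}(b) and the permutation \eqref{eq:2.5c} to obtain \eqref{eq:3.14a}--\eqref{eq:3.14d}.

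The only genuine difficulty is computational: verifying that $\det\big(\mathbf{I}_{2} + \mathbf{C}\mathbf{U}^{\top}\boldsymbol{\Upsilon}^{-1}\mathbf{U}\big)$ is the quantity $\rho$ as written, and that the sandwiched $2 \times 2$ correction reproduces verbatim the four terms of \eqref{eq:3.13a} (and their $\mathbf{v},\mathbf{w}$-analogues). One could instead split the perturbation as $(\theta\mathbf{x} + \vartheta\mathbf{y})\mathbf{x}^{\top} + \vartheta\mathbf{x}\mathbf{y}^{\top}$ and iterate the rank-one Sherman--Morrison/Miller update twice; that route, however, forces one to discard the auxiliary nonsingularity of the intermediate rank-one update by a density argument in the parameters $a,b,c,d,\xi,\eta$, exactly in the spirit of the proof of Theorem~\ref{thr1}, whereas the rank-$2$ form above sidesteps this altogether.
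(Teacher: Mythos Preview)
Your proof is correct and, at the structural level, tracks the paper's: both start from the block diagonalization of Lemma~\ref{lem3}, reduce $\mathbf{H}_{n}^{-1}$ to $\boldsymbol{\Phi}^{-1}$ and $\boldsymbol{\Psi}^{-1}$, and then invert each block as a diagonal-plus-low-rank perturbation. The difference is in how that inversion is carried out. The paper applies Miller's rank-one formula \emph{three} times in succession, peeling off $\theta\mathbf{x}\mathbf{x}^{\top}$, then $\vartheta\mathbf{x}\mathbf{y}^{\top}$, then $\vartheta\mathbf{y}\mathbf{x}^{\top}$; this introduces the auxiliary nonsingularity conditions \eqref{eq:3.1a}, \eqref{eq:3.1b}, \eqref{eq:3.2a}, \eqref{eq:3.2b} on the intermediate updates, which the paper then removes at the end by observing that the final identity makes sense (and hence holds by continuity) under the sole hypotheses $\lambda_{k}\neq 0$ and $\det\mathbf{H}_{n}\neq 0$. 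You instead package the perturbation as a single rank-two term $\mathbf{U}\mathbf{C}\mathbf{U}^{\top}$ with $\mathbf{C}=\left[\begin{smallmatrix}\theta&\vartheta\\\vartheta&0\end{smallmatrix}\right]$ and invoke the Woodbury-type identity $(\boldsymbol{\Upsilon}+\mathbf{U}\mathbf{C}\mathbf{U}^{\top})^{-1}=\boldsymbol{\Upsilon}^{-1}-\boldsymbol{\Upsilon}^{-1}\mathbf{U}(\mathbf{I}_{2}+\mathbf{C}\mathbf{U}^{\top}\boldsymbol{\Upsilon}^{-1}\mathbf{U})^{-1}\mathbf{C}\mathbf{U}^{\top}\boldsymbol{\Upsilon}^{-1}$, which is valid without assuming $\mathbf{C}$ invertible and whose only requirement, $\det(\mathbf{I}_{2}+\mathbf{C}\mathbf{U}^{\top}\boldsymbol{\Upsilon}^{-1}\mathbf{U})=\rho\neq 0$, is already forced by the nonsingularity of $\mathbf{H}_{n}$ via Sylvester. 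Your route therefore sidesteps the density/continuity clean-up the paper needs; conversely, the paper's iterated approach makes the emergence of the scalar $\rho$ and the four rank-one correction terms somewhat more transparent step by step. You are right that the $2\times 2$ algebra (checking $\det(\mathbf{I}_{2}+\mathbf{C}\mathbf{U}^{\top}\boldsymbol{\Upsilon}^{-1}\mathbf{U})=\rho$ and that the sandwiched correction matches \eqref{eq:3.13a}) is the only real work; it goes through exactly as you sketch.
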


\begin{proof}
Consider an even integer $n \geqslant 5$, $a,b,c,d, \xi, \eta \in \mathbb{C}$, $\lambda_{k} \neq 0$, $k = 1,\ldots,n$ be given by \eqref{eq:2.3} and $\mathbf{H}_{n}$ in \eqref{eq:1.1} nonsingular. Recall that if $\mathbf{H}_{n}$ is nonsingular then $\rho$ and $\varrho$ in \eqref{eq:3.13b} and \eqref{eq:3.13d}, respectively, are both nonzero. Setting $\theta := c + \xi - a$, $\vartheta := d + \eta - b$ and assuming that conditions \eqref{eq:3.1a} and \eqref{eq:3.1b} are satisfied (note that \eqref{eq:3.1c} corresponds to $\rho \neq 0$) we have, from the main result of \cite{Miller81} (see \cite{Miller81}, pages $69$ and $70$),
\begin{equation*}
\big(\boldsymbol{\Upsilon}_{\frac{n}{2}} + \theta \mathbf{x} \mathbf{x}^{\top} \big)^{-1} = \boldsymbol{\Upsilon}_{\frac{n}{2}}^{-1} - \tfrac{\theta}{1 + \theta \mathbf{x}^{\top} \boldsymbol{\Upsilon}_{\frac{n}{2}}^{-1} \mathbf{x}} \boldsymbol{\Upsilon}_{\frac{n}{2}}^{-1} \mathbf{x} \mathbf{x}^{\top} \boldsymbol{\Upsilon}_{\frac{n}{2}}^{-1},
\end{equation*}
\begin{equation*}
\begin{split}
&\big(\boldsymbol{\Upsilon}_{\frac{n}{2}} + \theta \mathbf{x} \mathbf{x}^{\top} + \vartheta \mathbf{x} \mathbf{y}^{\top} \big)^{-1} = \\
& \hspace*{2.0cm} = \big(\boldsymbol{\Upsilon}_{\frac{n}{2}} + \theta \mathbf{x} \mathbf{x}^{\top} \big)^{-1} - \tfrac{\vartheta}{1 + \vartheta \mathbf{y}^{\top} \big(\boldsymbol{\Upsilon}_{\frac{n}{2}} + \theta \mathbf{x} \mathbf{x}^{\top} \big)^{-1} \mathbf{x}} \big(\boldsymbol{\Upsilon}_{\frac{n}{2}} + \theta \mathbf{x} \mathbf{x}^{\top} \big)^{-1} \mathbf{x} \mathbf{y}^{\top} \big(\boldsymbol{\Upsilon}_{\frac{n}{2}} + \theta \mathbf{x} \mathbf{x}^{\top} \big)^{-1} \\
& \hspace*{2.0cm} = \boldsymbol{\Upsilon}_{\frac{n}{2}}^{-1} - \tfrac{\theta}{1 + \theta \mathbf{x}^{\top} \boldsymbol{\Upsilon}_{\frac{n}{2}}^{-1} \mathbf{x} + \vartheta \mathbf{y}^{\top} \boldsymbol{\Upsilon}_{\frac{n}{2}}^{-1} \mathbf{x}} \boldsymbol{\Upsilon}_{\frac{n}{2}}^{-1} \mathbf{x} \mathbf{x}^{\top} \boldsymbol{\Upsilon}_{\frac{n}{2}}^{-1} - \tfrac{\vartheta}{1 + \theta \mathbf{x}^{\top} \boldsymbol{\Upsilon}_{\frac{n}{2}}^{-1} \mathbf{x} + \vartheta \mathbf{y}^{\top} \boldsymbol{\Upsilon}_{\frac{n}{2}}^{-1} \mathbf{x}} \boldsymbol{\Upsilon}_{\frac{n}{2}}^{-1} \mathbf{x} \mathbf{y}^{\top} \boldsymbol{\Upsilon}_{\frac{n}{2}}^{-1}
\end{split}
\end{equation*}
and
\begin{equation}\label{eq:3.15}
\begin{split}
\big(\boldsymbol{\Upsilon}_{\frac{n}{2}} + \theta \mathbf{x} \mathbf{x}^{\top} + \vartheta \mathbf{x} \mathbf{y}^{\top} + \vartheta \mathbf{y} \mathbf{x}^{\top} \big)^{-1} &= \\
& \hspace*{-5.0cm} = \big(\boldsymbol{\Upsilon}_{\frac{n}{2}} + \theta \mathbf{x} \mathbf{x}^{\top} + \vartheta \mathbf{x} \mathbf{y}^{\top} \big)^{-1} + \\
&\hspace*{-2.5cm} - \tfrac{\vartheta}{1 + \vartheta \mathbf{x}^{\top} \big(\boldsymbol{\Upsilon}_{\frac{n}{2}} + \theta \mathbf{x} \mathbf{x}^{\top} + \vartheta \mathbf{x} \mathbf{y}^{\top} \big)^{-1} \mathbf{y}} \big(\boldsymbol{\Upsilon}_{\frac{n}{2}} + \theta \mathbf{x} \mathbf{x}^{\top} + \vartheta \mathbf{x} \mathbf{y}^{\top} \big)^{-1} \mathbf{x} \mathbf{y}^{\top} \big(\boldsymbol{\Upsilon}_{\frac{n}{2}} + \theta \mathbf{x} \mathbf{x}^{\top} + \vartheta \mathbf{x} \mathbf{y}^{\top} \big)^{-1} \\
& \hspace*{-5.0cm} = \boldsymbol{\Upsilon}_{\frac{n}{2}}^{-1} - \tfrac{\vartheta + \vartheta^{2} \mathbf{y}^{\top} \boldsymbol{\Upsilon}_{\frac{n}{2}}^{-1} \mathbf{x}}{\rho} \boldsymbol{\Upsilon}_{\frac{n}{2}}^{-1} \left(\mathbf{y} \mathbf{x}^{\top} + \mathbf{x} \mathbf{y}^{\top} \right) \boldsymbol{\Upsilon}_{\frac{n}{2}}^{-1} + \tfrac{\vartheta^{2}\mathbf{y}^{\top} \boldsymbol{\Upsilon}_{\frac{n}{2}}^{-1} \mathbf{y} - \theta}{\rho} \boldsymbol{\Upsilon}_{\frac{n}{2}}^{-1} \mathbf{x} \mathbf{x}^{\top} \boldsymbol{\Upsilon}_{\frac{n}{2}}^{-1} + \tfrac{\vartheta^{2} \mathbf{x}^{\top} \boldsymbol{\Upsilon}_{\frac{n}{2}}^{-1} \mathbf{x}}{\rho} \boldsymbol{\Upsilon}_{\frac{n}{2}}^{-1} \mathbf{y} \mathbf{y}^{\top} \boldsymbol{\Upsilon}_{\frac{n}{2}}^{-1},
\end{split}
\end{equation}
with $\boldsymbol{\Upsilon}_{\frac{n}{2}} := \mathrm{diag} \left(\lambda_{1},\lambda_{3},\ldots,\lambda_{n-1} \right)$, $\mathbf{x}, \mathbf{y}$ given by \eqref{eq:2.4a} and $\rho$ in \eqref{eq:3.13b}. In the same way, supposing \eqref{eq:3.2a} and \eqref{eq:3.2b} (observe that \eqref{eq:3.2c} is $\varrho \neq 0$) we obtain
\begin{equation*}
\big(\boldsymbol{\Delta}_{\frac{n}{2}} + \theta \mathbf{v} \mathbf{v}^{\top} \big)^{-1} = \boldsymbol{\Delta}_{\frac{n}{2}}^{-1} - \tfrac{\theta}{1 + \theta \mathbf{v}^{\top} \boldsymbol{\Delta}_{\frac{n}{2}}^{-1} \mathbf{v}} \boldsymbol{\Delta}_{\frac{n}{2}}^{-1} \mathbf{v} \mathbf{v}^{\top} \boldsymbol{\Delta}_{\frac{n}{2}}^{-1},
\end{equation*}
\begin{equation*}
\begin{split}
&\big(\boldsymbol{\Delta}_{\frac{n}{2}} + \theta \mathbf{v} \mathbf{v}^{\top} + \vartheta \mathbf{v} \mathbf{w}^{\top} \big)^{-1} = \\
& \hspace*{2.0cm} = \big(\boldsymbol{\Delta}_{\frac{n}{2}} + \theta \mathbf{v} \mathbf{v}^{\top} \big)^{-1} - \tfrac{\vartheta}{1 + \vartheta \mathbf{w}^{\top} \big(\boldsymbol{\Delta}_{\frac{n}{2}} + \theta \mathbf{v} \mathbf{v}^{\top} \big)^{-1} \mathbf{v}} \big(\boldsymbol{\Delta}_{\frac{n}{2}} + \theta \mathbf{v} \mathbf{v}^{\top} \big)^{-1} \mathbf{v} \mathbf{w}^{\top} \big(\boldsymbol{\Delta}_{\frac{n}{2}} + \theta \mathbf{v} \mathbf{v}^{\top} \big)^{-1} \\
& \hspace*{2.0cm} = \boldsymbol{\Delta}_{\frac{n}{2}}^{-1} - \tfrac{\theta}{1 + \theta \mathbf{v}^{\top} \boldsymbol{\Delta}_{\frac{n}{2}}^{-1} \mathbf{v} + \vartheta \mathbf{w}^{\top} \boldsymbol{\Delta}_{\frac{n}{2}}^{-1} \mathbf{v}} \boldsymbol{\Delta}_{\frac{n}{2}}^{-1} \mathbf{v} \mathbf{v}^{\top} \boldsymbol{\Delta}_{\frac{n}{2}}^{-1} - \tfrac{\vartheta}{1 + \theta \mathbf{v}^{\top} \boldsymbol{\Delta}_{\frac{n}{2}}^{-1} \mathbf{v} + \vartheta \mathbf{w}^{\top} \boldsymbol{\Delta}_{\frac{n}{2}}^{-1} \mathbf{v}} \boldsymbol{\Delta}_{\frac{n}{2}}^{-1} \mathbf{v} \mathbf{w}^{\top} \boldsymbol{\Delta}_{\frac{n}{2}}^{-1}
\end{split}
\end{equation*}
and
\begin{equation}\label{eq:3.16}
\begin{split}
\big(\boldsymbol{\Delta}_{\frac{n}{2}} + \theta \mathbf{v} \mathbf{v}^{\top} + \vartheta \mathbf{v} \mathbf{w}^{\top} + \vartheta \mathbf{w} \mathbf{v}^{\top} \big)^{-1} &= \\
& \hspace*{-4.5cm} = \big(\boldsymbol{\Delta}_{\frac{n}{2}} + \theta \mathbf{v} \mathbf{v}^{\top} + \vartheta \mathbf{v} \mathbf{w}^{\top} \big)^{-1} + \\
&\hspace*{-3.5cm} - \tfrac{\vartheta}{1 + \vartheta \mathbf{v}^{\top} \big(\boldsymbol{\Delta}_{\frac{n}{2}} + \theta \mathbf{v} \mathbf{v}^{\top} + \vartheta \mathbf{v} \mathbf{w}^{\top} \big)^{-1} \mathbf{w}} \big(\boldsymbol{\Delta}_{\frac{n}{2}} + \theta \mathbf{v} \mathbf{v}^{\top} + \vartheta \mathbf{v} \mathbf{w}^{\top} \big)^{-1} \mathbf{v} \mathbf{w}^{\top} \big(\boldsymbol{\Delta}_{\frac{n}{2}} + \theta \mathbf{v} \mathbf{v}^{\top} + \vartheta \mathbf{v} \mathbf{w}^{\top} \big)^{-1} \\
& \hspace*{-4.5cm} = \boldsymbol{\Delta}_{\frac{n}{2}}^{-1} - \tfrac{\vartheta + \vartheta^{2} \mathbf{w}^{\top} \boldsymbol{\Delta}_{\frac{n}{2}}^{-1} \mathbf{v}}{\varrho} \boldsymbol{\Delta}_{\frac{n}{2}}^{-1} \left(\mathbf{w} \mathbf{v}^{\top} + \mathbf{v} \mathbf{w}^{\top} \right) \boldsymbol{\Delta}_{\frac{n}{2}}^{-1} + \tfrac{\vartheta^{2}\mathbf{w}^{\top} \boldsymbol{\Delta}_{\frac{n}{2}}^{-1} \mathbf{w} - \theta}{\varrho} \boldsymbol{\Delta}_{\frac{n}{2}}^{-1} \mathbf{v} \mathbf{v}^{\top} \boldsymbol{\Delta}_{\frac{n}{2}}^{-1} \\
& \hspace*{5.6cm} + \tfrac{\vartheta^{2} \mathbf{v}^{\top} \boldsymbol{\Delta}_{\frac{n}{2}}^{-1} \mathbf{v}}{\varrho} \boldsymbol{\Delta}_{\frac{n}{2}}^{-1} \mathbf{w} \mathbf{w}^{\top} \boldsymbol{\Delta}_{\frac{n}{2}}^{-1},
\end{split}
\end{equation}
where $\boldsymbol{\Delta}_{\frac{n}{2}} := \mathrm{diag} \left(\lambda_{2},\lambda_{4},\ldots,\lambda_{n} \right)$, $\mathbf{v}, \mathbf{w}$ given by \eqref{eq:2.5a} and $\varrho$ in \eqref{eq:3.13d}. Since the nonsingularity of $\mathbf{H}_{n}$ and $\lambda_{k} \neq 0$, for all $k = 1,\ldots,n$ are sufficient for the both sides of \eqref{eq:3.15} and \eqref{eq:3.16} to be well-defined, conditions \eqref{eq:3.1a}, \eqref{eq:3.1b}, \eqref{eq:3.2a} and \eqref{eq:3.2b} previously assumed can be dropped. Hence, the block diagonalization provided in (a) of Lemma~\ref{lem3} together with 8.5b of \cite{Harville97} (see page $88$) establish the thesis in (a). The proof of (b) is analogous, so that we will omit the details.
\end{proof}

\bigskip

\begin{acknowledgement}
This work is a contribution to the Project UID/GEO/04035/2013, funded by FCT - Funda\c{c}\~{a}o para a Ci\^{e}ncia e a Tecnologia, Portugal.
\end{acknowledgement}

\bigskip

\end{document}